\newtheorem{theorem}{Theorem}[section]
\newtheorem{lemma}[theorem]{Lemma}
\newtheorem{proposition}[theorem]{Proposition}
\theoremstyle{definition}
\numberwithin{equation}{section}
\def\LL{{\mathcal L}}
\def\HH{\mathcal H}
\def\RR{{\mathbb R}}
\def\Ln{\mathcal L^n}
\def\m{\mu}
\def\on{\omega_n}
\def\O{\Omega}
\def\rN{\RR^N}
\def\esup { {\rm ess} \sup}
\def\W11{\mathop{W^{1,1}_{\rm loc}}}
\title[P\'olya-Szeg\"o inequality and Comparison results]
{New P\'olya-Szeg\"o-type inequalities \\ and an alternative approach \\ 
to comparison results for PDE's}
\author{F. Brock, F. Chiacchio,  A. Ferone, A. Mercaldo}
\address{Institute of Mathematics --- University of Rostock, 
, Ulmenstr. 69, 18057 Rostock,  Germany,}
\email{friedemann.brock@uni-rostock.de}
\address{Dipartimento di Matematica ---  Universit\`a degli Studi della Campania Luigi Vanvitelli,
Viale Lincoln 5, 81100 Caserta, Italy}
\email{adele.ferone@unina2.it}
\address{Dipartimento di Matematica e Applicazioni ``R. Caccioppoli'' --- Universit\`a degli Studi di Napoli Federico II,
Complesso Monte S. Angelo, via Cintia, 80126 Napoli, Italy}
\email{mercaldo@unina.it}
\email{francesco.chiacchio@unina.it}
\begin{document}

\begin{abstract} We prove some P\'olya-Szeg\"o type inequalities  which involve couples of functions  and their rearrangements. Our inequalities reduce to the classical  P\'olya-Szeg\"o principle when  the two functions coincide. As an application, we give a different proof of  a comparison result for solutions to Dirichlet boundary value problems for Laplacian equations proved in \cite{ADLT}.

\noindent
{\sl Key words: P\'olya-Szeg\"o principle, Steiner symmetrization, elliptic equations, comparison results}  
\rm 
\\[0.1cm]
{\sl 2010 Mathematics Subject Classification:} 26D15, 35J15, 35J25.
\rm 
 
\end{abstract}

\maketitle

\section{Introduction}
The celebrated P\'olya-Szeg\"o Principle asserts that Dirichlet type integrals do not increase under Schwarz symmetrization. In its simplest form it states that, if  $u$ is a compactly supported function which belongs to $W^{1,2}(\RR^n)$ then also its {\it spherically symmetric rearrangement} $u^\star$ is in $W^{1,2}(\RR^n)$ and
\begin{equation}\label{PS}
\int_{\RR^N} |\nabla u(z)|^2 dz\ge \int_{\RR^N} |\nabla u^\star(z)|^2 dz\,.
\end{equation}
The interest in the  P\'olya-Szeg\"o principle is due to its multitude of applications 
 in analysis and physics. For instance, it is the main tool in proving isoperimetric inequalities for capacities and of Faber-Krahn type, as well as  apriori estimates for solutions to boundary value problems for PDEs (see e.g.\cite{Ba}, \cite{He}, \cite{Ka},  \cite{Ke}, \cite{Tart} and references therein)
The topic has attracted  the attention of many authors, and it has been developped in various directions since the middle of last century. For instance,  more general functionals of the gradient under different types of symmetrizations have been investigated (see, for example, \cite{BaeTa}, \cite{Br1}, \cite{BMP}, \cite{Bur1}, \cite{CF1}, \cite{E}, \cite{ET}, \cite{Talentiweig} and references therein). More recently, the equality case and the stability in these inequalities have been studied 
(see, for example, \cite{BZ}, \cite{FV}).

In this paper we prove a P\'olya-Szeg\"o type inequality which - unlike the classical case \eqref{PS} - involves {\sl two}  functions  $u,w$ and their rearrangements. 
Our inequality reduces to \eqref{PS} when $u=w$. 
We focus on the Steiner symmetrization, and we will analyze the differences which appear when Steiner symmetrization  is replaced by Schwarz symmetrization. 
\\
The proofs of P\'olya-Szeg\"o type inequalities are tipically based on  the isoperimetric inequality in Euclidean space, while  our approach relies on two further well-known tools from the theory of rearrangements: the   
 {\it Hardy-Littlewood inequality} and the {\it Riesz inequality}. The Hardy-Littlewood 
states that 
\begin{equation}\label{HL}
\int_{\RR^N} u(z)w(z)dz\le \int_{\RR^N} u^{\#}(z)w^{\#}(z)dz\,,
\end{equation}
for any couple of measurable nonnegative functions. Here $u^\#$ and $w^\#$ are the Steiner rearrangements of $u$ and $w$ respectively defined in Section \ref{sec:defsch}. 
 
The main result of the paper is
\begin{theorem} \label{main}
Assume that $u$ and $w$ are Lipschitz-continuous nonnegative functions with compact support defined in $\RR^N$ and 
\begin{equation}\label{HL=}
\int\limits_{\rN} u(z)w(z)dz=\int\limits_{\rN} u^\#(z)w^\#(z)dz\,.
\end{equation}
Then the following inequalities hold
\begin{equation}\label{PSgx}
\int_{\O}\nabla_xu(z)\cdot\nabla_xw(z)\,dz\ge \int_{\O^\#}\nabla_xu^\#(z)\cdot\nabla_xw^\#(z)\,dz\,,
\end{equation}
and that, for each $i=1, \dots, m$
\begin{equation}\label{PSgy}
\int_\O u_{y_i}(z)\cdot w_{y_i}(z)\, dz \ge \int_{\O^\#}u^\#_{y_i}(z)\cdot w^\#_{y_i}(z)dz\,
\end{equation}
and, hence
\begin{equation}
\label{PSg}
\int\limits_{\rN} \nabla u(z)\cdot \nabla w(z)dz
 \ge  \int\limits_{\rN} \nabla u^\#(z)\cdot \nabla w^\#(z)dz\,.
\end{equation}
\end{theorem}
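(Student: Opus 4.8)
The plan is to reduce everything to a one-dimensional (or rather, fiber-wise) statement via the coarea formula, and then exploit the equality hypothesis \eqref{HL=} to force the level sets of $u$ and $w$ to be ``compatibly nested'' along almost every Steiner fiber. Write $z=(x,y)$ with $x\in\RR^n$ and $y\in\RR^m$, $n+m=N$, so that the Steiner rearrangement $u^\#$ symmetrizes in the $x$-variable for each fixed $y$. First I would record the layer-cake/coarea identity
\[
\int_{\RR^N}\nabla_x u\cdot\nabla_x w\,dz
=\int_0^\infty\!\!\int_0^\infty \left(\int_{\{u>t\}\cap\{w>s\}}\!\!\! \text{(distributional boundary term)}\right)dt\,ds,
\]
or more precisely use the Fleming--Rishel coarea formula to express the gradient pairing in terms of integrals over the level surfaces $\partial\{x:u(x,y)>t\}$ and the projection of $\nabla_x w$ onto the normal of that surface. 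The key classical input I would invoke is the Riesz rearrangement inequality together with the Hardy--Littlewood inequality \eqref{HL}, exactly as advertised in the introduction; the point is that \eqref{HL=} is a rigidity hypothesis.

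The heart of the argument is to show that equality in Hardy--Littlewood \eqref{HL=} implies that for a.e.\ $y$ and a.e.\ $t$, the set $\{x:u(x,y)>t\}$ is (up to null sets) an interval centered at the origin, or at least that the super-level sets of $u(\cdot,y)$ and $w(\cdot,y)$ form a chain ordered by inclusion and all centered consistently. Indeed, applying Hardy--Littlewood fiber-wise, $\int_{\RR^n}u(x,y)w(x,y)\,dx\le\int_{\RR^n}u^\#(x,y)w^\#(x,y)\,dx$ for every $y$, and \eqref{HL=} forces equality for a.e.\ $y$. From the known equality analysis of the one-dimensional Hardy--Littlewood / Riesz inequality (the "bathtub" rigidity), equality on a fiber means the super-level sets $\{x:u(x,y)>t\}$ and $\{x:w(x,y)>s\}$ are, for all $t,s>0$, intervals sharing the same center, i.e.\ $u(\cdot,y)$ and $w(\cdot,y)$ are already symmetric decreasing about a common point after a translation that does not affect the gradient integral. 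Then on each such fiber one has, essentially by Cauchy--Schwarz applied to the co-area representation (or directly since $\nabla_x u$ and $\nabla_x w$ become parallel and point in opposite radial directions on the level curves),
\[
\int_{\RR^n}\nabla_x u(x,y)\cdot\nabla_x w(x,y)\,dx
\ge \int_{\RR^n}\nabla_x u^\#(x,y)\cdot\nabla_x w^\#(x,y)\,dx,
\]
and integrating in $y$ gives \eqref{PSgx}. For \eqref{PSgy} the argument is different: $u_{y_i}^\#$ is not simply related to $u_{y_i}$, so here I would use the fact that Steiner symmetrization is a contraction in the sense of the classical Pólya--Szegő together with a polarization identity, writing $\nabla u\cdot\nabla w=\tfrac14(|\nabla(u+w)|^2-|\nabla(u-w)|^2)$ and using that, under \eqref{HL=}, $(u\pm w)^\#=u^\#\pm w^\#$ — this is the crucial consequence of the equality case, since generically $(u+w)^\#\ne u^\#+w^\#$. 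Granting $(u+w)^\#=u^\#+w^\#$, the classical Pólya--Szegő inequality applied to $u+w$ gives the $+$ term, and one needs the reverse direction (an equality, or at least the correct inequality) for the $-$ term; this is where one must be careful, and I expect it is handled by noting that on the common-center fibers, $u-w$ as a function of $x$ is also monotone in $|x|$ on each region, so Pólya--Szegő for it is actually an equality.

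The main obstacle, and the step I would spend the most care on, is precisely the rigidity statement: translating the equality \eqref{HL=} into the structural conclusion that $u$ and $w$ have, fiber by fiber, nested super-level sets about a common center, and in particular that $(u\pm w)^\# = u^\#\pm w^\#$. The subtlety is that equality in Hardy--Littlewood on a fiber only pins down each super-level set to be an interval symmetric about \emph{some} point that could a priori depend on the level $t$; one must use equality across all pairs $(t,s)$ simultaneously (i.e.\ the full strength of \eqref{HL=} rather than just one slice) to rule out the centers drifting, and also handle the measure-zero sets of bad fibers and bad levels. A secondary technical point is justifying the coarea/polarization manipulations for merely Lipschitz functions (so Sobolev, not smooth), which requires the standard approximation and the fact that $\nabla u=0$ a.e.\ on level sets of positive measure; I would dispatch this by the usual density arguments and by invoking the results on differentiability of rearrangements quoted earlier in the paper.
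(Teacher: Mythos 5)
Your argument hinges on a rigidity claim that is false: equality in the Hardy--Littlewood inequality \eqref{HL=} does \emph{not} force the super-level sets of $u(\cdot,y)$ and $w(\cdot,y)$ to be intervals (balls) about a common center. As recalled in Section 2 of the paper, equality holds if and only if the level sets of $u$ and of $w$ are \emph{mutually nested}; this constrains only the relative ordering of the two families of level sets, not their shape. The simplest counterexample is $u=w$ with $u$ an arbitrary nonsymmetric Lipschitz function: \eqref{HL=} is then automatic (symmetrization preserves the $L^2$ norm), yet no level set need be a ball. Under your structural conclusion, \eqref{PSgx} would degenerate into an equality on every fiber, contradicting the fact that for $u=w$ the statement reduces to the classical P\'olya--Szeg\"o inequality \eqref{PS}, which is strict for nonsymmetric functions. (The ``bathtub''/Riesz rigidity you invoke pertains to strictly symmetric-decreasing kernels, not to the kernel-free Hardy--Littlewood pairing.) The same false rigidity undermines your treatment of \eqref{PSgy}: the polarization identity requires, for the term $-\tfrac14\int|\nabla(u-w)|^2$, a \emph{reverse} P\'olya--Szeg\"o inequality for $u-w$, which fails in general and which you propose to salvage only via the common-center structure that does not exist.

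The paper's proof needs none of this. It writes $\int\nabla_xu\cdot\nabla_xw\,dz$ as the limit of $\epsilon^{-2}\iint(u(x+\epsilon h,y)-u(x,y))(w(x+\epsilon h,y)-w(x,y))\phi(h)\,dh\,dz$ for a radial weight $\phi$ supported in the unit ball, expands the product into four terms, bounds the two negatively signed cross terms of the form $\iint u(x+\epsilon h,y)w(x,y)\phi(h)$ from above by Riesz's inequality applied fiberwise in $x$, and identifies the two diagonal terms with their symmetrized counterparts using only \eqref{HL=} as stated --- no equality-case analysis whatsoever. For \eqref{PSgy} the same difference-quotient scheme works with translations in the $y_i$-direction, using plain Hardy--Littlewood \eqref{HL} for the cross terms (since $(u(\cdot,\cdot+\epsilon e_i))^\#=u^\#(\cdot,\cdot+\epsilon e_i)$) and \eqref{HL=} again for the diagonal ones. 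If you wish to keep your coarea/nestedness viewpoint, you would have to derive \eqref{PSgx} directly from mutual nestedness of arbitrary, non-spherical level sets, which is essentially a different and harder theorem; as written, the proposal does not prove the result.
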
 

\noindent Note that, if $u=w$, then equation \eqref{HL=} is in force, since symmetrization preserves the $L^2$ norm. As previously mentioned,  in such case \eqref{PSg} reduces to the standard P\'olya-Szeg\"o inequality \eqref{PS}. 

\noindent The proof of Theorem 1.1 is based on a  discretization of the gradient and the  Riesz Inequality.
\\
Inequality \eqref{eq:wPS} in our next Theorem is related to \eqref{PSg}.  
The difference is 
that we allow $w$ to be {\sl not } weakly differentiable, but instead we require more regularity for $u$.
\begin{theorem} \label{wPS}
Let $\O$  be  a  bounded domain of $\RR^N$ and let $u\in C^2(\O)\bigcap C(\overline{\O})$ be a nonnegative function satisfying   $u=0$ on $\partial \O$. 
Further, let $W\in W_0^{1,\infty}(\O ^\#)$ be  a nonnegative function such that $W=W^\#$.  Then, if $w$ is any function satisfying \eqref{HL=} with $w^\#=W$, we have that
\begin{equation}\label{eq:wPS}
-\int_{\O } w(x)\Delta u(x)dx\ge \int_{\O^\#}\nabla u^\#(x) \cdot\nabla W(x)dx\,.
\end{equation}
\end{theorem}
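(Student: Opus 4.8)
The plan is to decompose $w$ by the layer--cake formula, $w(x)=\int_0^\infty\chi_{\{w>t\}}(x)\,dt$, and to reduce \eqref{eq:wPS} to a one--parameter family of flux inequalities through the superlevel sets of $w$. A first, purely ``rearrangement'' remark is that the Hardy--Littlewood equality \eqref{HL=} localises: since $u\ge0$ one has $\int_\Omega uw=\int_0^\infty\big(\int_{\{w>t\}}u\big)\,dt$ and likewise $\int_{\Omega^\#}u^\#w^\#=\int_0^\infty\big(\int_{\{w^\#>t\}}u^\#\big)\,dt$; since $\{w^\#>t\}=(\{w>t\})^\#$, the set form of \eqref{HL}, namely $\int_E f\le\int_{E^\#}f^\#$, gives $\int_{\{w>t\}}u\le\int_{\{w^\#>t\}}u^\#$ for every $t$, and as the $t$--integrals agree we conclude
\[
\int_{\{w>t\}}u(z)\,dz=\int_{\{w^\#>t\}}u^\#(z)\,dz\qquad\text{for a.e.\ }t>0 .
\]

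The core step is the \emph{per--level flux inequality}: for a.e.\ $t>0$, setting $E=\{w>t\}$ and $E^\#=(\{w>t\})^\#=\{w^\#>t\}=\{W>t\}$,
\[
-\int_E\Delta u\,dz\ \ge\ -\int_{\partial E^\#}\nabla u^\#\cdot\nu\,d\mathcal H^{N-1},
\]
where on the left $-\int_E\Delta u=-\int_{\partial E}\nabla u\cdot\nu\,d\mathcal H^{N-1}$ by the divergence theorem (legitimate since $u\in C^2(\Omega)$ and, for a.e.\ $t$, $E$ has finite perimeter), and the right--hand side is the outward flux of $\nabla u^\#$ through the reduced boundary $\partial E^\#$, which is meaningful for a.e.\ $t$ because $W=w^\#$ is Lipschitz---so $\{W>t\}$ has a rectifiable boundary for a.e.\ $t$---and $\nabla u^\#\in L^\infty$ possesses an approximate trace along these level sets. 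To prove this inequality I would invoke Theorem \ref{main}: approximate $\chi_E$ in $L^1$ by nonnegative Lipschitz functions $w_j$ with compact support in $\Omega$, chosen so that $w_j^\#\to\chi_{E^\#}$ in a suitable ($BV$--strict) sense and, using the localised equality just established, $\int_\Omega u\,w_j=\int_{\Omega^\#}u^\#\,w_j^\#$ (exactly, after a harmless adjustment of $w_j$). Then integration by parts gives $-\int_\Omega w_j\,\Delta u=\int_\Omega\nabla u\cdot\nabla w_j$, inequality \eqref{PSg} of Theorem \ref{main} gives $\int_\Omega\nabla u\cdot\nabla w_j\ge\int_{\Omega^\#}\nabla u^\#\cdot\nabla w_j^\#$ (after, if necessary, multiplying $u$ by a cut--off and letting it tend to $1$), and letting $j\to\infty$ on both sides yields the per--level inequality.

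Finally I would integrate the per--level inequality over $t\in(0,\infty)$. On the left the layer--cake formula returns $-\int_\Omega w\,\Delta u\,dz$. On the right, the co--area formula for the Lipschitz function $W$---that is, $\nabla W\,\mathcal L^N=\int_0^\infty D\chi_{\{W>t\}}\,dt$ as $\RR^N$--valued measures---paired against the bounded field $\nabla u^\#$, returns $\int_{\Omega^\#}\nabla u^\#\cdot\nabla W\,dz$. This is precisely \eqref{eq:wPS}. (When $w=W$ everything reduces, as it should, to the classical Steiner P\'olya--Szeg\"o inequality read through the co--area formula.)

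I expect the main obstacle to lie in the approximation inside the per--level step: one must approximate $\chi_E$ in $L^1$ by Lipschitz functions that simultaneously carry (essentially) the prescribed Steiner rearrangement $\chi_{E^\#}$ and enforce the Hardy--Littlewood equality against $u$, so that Theorem \ref{main} is applicable along the sequence; the behaviour of $w$ on the critical set $\{\nabla u=0\}$, where $w$ may be discontinuous, has to be handled with care, as does the measurability in $t$ of the quantities involved and the existence of approximate traces of $\nabla u^\#$ on the relevant level sets. A secondary, routine issue is that under the stated hypotheses $\nabla u$ and $\Delta u$ need not be bounded up to $\partial\Omega$; this is dealt with by exhausting $\Omega$ with smooth subdomains compactly contained in it, using $u=0$ on $\partial\Omega$ and $u,w\ge0$, together with the fact that the approximants $w_j$ can be taken supported in a fixed such subdomain.
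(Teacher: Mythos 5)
Your layer--cake route is genuinely different from the paper's argument, and it contains gaps that I do not see how to close. The most serious one is the passage to the limit on the symmetrized side of the per--level step. You want $\int_{\Omega^\#}\nabla u^\#\cdot\nabla w_j^\#\,dz$ to converge to a flux of $\nabla u^\#$ through $\partial\{W>t\}$ as $w_j^\#\to\chi_{\{W>t\}}$ BV--strictly. Strict convergence gives weak--$*$ convergence of the gradient measures together with convergence of total variations, but pairing a weakly--$*$ convergent sequence of measures with the merely bounded, generally discontinuous field $\nabla u^\#$ does not pass to the limit; you would need $\nabla u^\#$ to be continuous across $\partial\{W>t\}$ or to be a divergence--measure field with a normal trace in the Anzellotti sense, and neither is available here ($u^\#$ is not even Lipschitz up to $\partial\Omega^\#$ under the hypotheses). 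Decomposing $W$ into characteristic functions discards exactly the Lipschitz regularity of $W$ that makes the right--hand side of \eqref{eq:wPS} well defined; the paper keeps that regularity by truncating only once, $W_\delta=(W-\delta)_+$, $w_\delta=(w-\delta)_+$, so that the symmetrized test object stays Lipschitz throughout. A second gap: Theorem \ref{main} requires \emph{both} functions to be Lipschitz with compact support, whereas here $u\in C^2(\Omega)\cap C(\overline\Omega)$ may have unbounded gradient near $\partial\Omega$; your proposed fix (multiply $u$ by a cut--off and let it tend to $1$) changes $u^\#$ and destroys the Hardy--Littlewood equality \eqref{HL=}, which is a hypothesis of Theorem \ref{main}, so the theorem cannot be invoked along that approximation. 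A third, smaller issue: you assert that $E=\{w>t\}$ has finite perimeter for a.e.\ $t$, but $w$ is an arbitrary measurable function with prescribed rearrangement --- only $w^\#=W$ is Lipschitz --- so its level sets can have infinite perimeter for every $t$, and the boundary--flux interpretation of the left--hand side is not available (though, as you note, the layer--cake identity for $-\int w\Delta u$ does not actually require it).

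For contrast, the paper's proof never localizes in $t$ and never applies Theorem \ref{main} as a black box. It represents $\tfrac{C}{n}\Delta_x u$ as the limit of symmetric second difference quotients $D^2_xu(x,y,\epsilon)$, uniformly on compact subsets of $\Omega$; pairs these against the compactly supported truncation $w_\delta$; applies the Riesz and Hardy--Littlewood inequalities directly to the shifted products $u(x\pm\epsilon h,y)w_\delta(x,y)$ exactly as in the proof of Theorem \ref{main}; and then a change of variables converts the second--difference expression tested against $W_\delta$ into a product of two first differences, whose limit is $\int\nabla u^\#\cdot\nabla W_\delta$ because $W_\delta$ \emph{is} Lipschitz. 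This uses no regularity of $w$ beyond measurability and no global Lipschitz bound on $u$. If you want to salvage your per--level strategy, you would at minimum need to (i) establish a trace theory for $\nabla u^\#$ on the level sets of $W$, and (ii) find an approximation of $\chi_{\{w>t\}}$ that preserves the Hardy--Littlewood equality exactly without requiring $u$ to be globally Lipschitz --- both of which appear to be harder than the theorem itself.
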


As an application of the previous two Theorems, we recover a comparison result proved in \cite{ADLT}, (see also \cite{Bae},  \cite{BK}, \cite{ChRic}, \cite{FM}, \cite{FM2} and the references therein). More precisely, we consider the following linear homogeneous Dirichlet problem 
\begin{equation}\label{prSt}
\begin{cases}
-\Delta\, u=f &\mbox{in } \O\\
u=0 &\mbox{on } \partial \O\,,
\end{cases}
\end{equation}
where $\O $  is a  bounded domain  of $\RR^{N}$. We decompose every $z\in\RR^N$ by $z=(x,y)$ with $x\in\RR^n\,,y\in\RR^m$ and $n+m=N$. Accordingly, let $\Omega _y = \{ x\in \mathbb{R}^n :\, (x,y)\in \Omega \} $ denote the $y$-section of $\Omega $.   By $B_r(0)$ we denote  the $n$-dimensional ball centered at the origin with radius  $r$, and by $\O^\#$  the subset of $\RR^N$ such that, for any $y\in\RR^m$, its $y$-section $(\O^\#)_y=\{x\in\RR^n:\, (x,y)\in \O^\#\}$ is the $n$-dimensional ball centered at zero which has the same ${\mathcal L}^n$-measure as  $\O _y$.  
Then the following result holds.

\begin{theorem}\label{compnonlinSt}
Let $\O $ be a  bounded domain  of $\RR^{N}$ satisfying the exterior sphere condition and $f\in L^q(\Omega)$, $q>\frac N 2$. Further, let $u\in W^{1,2}_0(\O)$ be a weak solution to problem \eqref{prSt}, and let $v\in W^{1,2}_0(\O^\#)$ be the weak solution to the symmetrized problem
\begin{equation}\label{PstarSt}
\begin{cases}
 -\Delta v=f^\# &\mbox{ in } \O^\#\,,\\
 v=0 & \mbox{ on }\partial\O^\#\, .
\end{cases}
\end{equation}
Then we have for all $r\in [0,\infty)$ and for a.e. $y\in\RR^m$
$$
\int_{B_r(0)} u^\# (x,y)dx\le\int_{B_r(0)}  v^\#(x,y)dx \,.
$$
\end{theorem}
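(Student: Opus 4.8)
The plan is to prove Theorem \ref{compnonlinSt} by combining Theorem \ref{wPS} with the classical comparison machinery of Talenti type, where the customary role of the isoperimetric inequality is replaced by the new P\'olya-Szeg\"o-type inequality \eqref{eq:wPS}. First I would set up the standard truncation argument: for $t>0$ and a.e.\ $y\in\RR^m$, consider the level sets $\{x:\,u^\#(x,y)>t\}$, which by definition of the Steiner rearrangement are balls $B_{\rho(t,y)}(0)$ in $\RR^n$; likewise for $v^\#$. The quantity one wants to control is
\begin{equation}
\Phi(r,y):=\int_{B_r(0)}u^\#(x,y)\,dx\,,
\end{equation}
and the goal is to show $\Phi_u\le\Phi_v$ pointwise. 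The natural device is to pass to the ``distribution-function'' formulation: using Fubini and the layer-cake formula, $\Phi_u(r,y)$ is expressed through $\int_0^\infty \mathcal L^n(\{x\in B_r(0):\,u^\#(x,y)>t\})\,dt$, so it suffices to get an inequality at the level of the functions $\mu_u(t):=\int_{\RR^m}\mathcal L^n(\{x:\,u(x,y)>t\})\,dy$ and the corresponding sliced measures. The heart of the matter is to derive a differential inequality for the ``mass'' functions that is \emph{stronger} for $v$ than for $u$ and then integrate it.

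Next I would produce the key pointwise estimate. Fix $t>0$ and choose as test function $w$ a suitable multiple of the characteristic-type function supported where $u>t$, arranged so that $w^\#$ equals the analogous radially symmetric profile $W$ built from $u^\#$, and so that the Hardy--Littlewood equality \eqref{HL=} holds; concretely $W$ should be taken of the form $W(x,y)=\min\{(u^\#(x,y)-t)_+,h\}/h$ (or a limit thereof as $h\to 0$), which is in $W_0^{1,\infty}((\O^\#)_y)$ and radially symmetric, and $w(x,y)$ the corresponding truncation of $(u(x,y)-t)_+$. Applying Theorem \ref{wPS} slice-by-slice in $y$ (justified because $u\in C^2(\O)\cap C(\overline\O)$, after an approximation of $\O$ by smooth subdomains using the exterior sphere condition to keep $u$ regular up to the boundary) and letting $h\to0$, the left-hand side $-\int w\Delta u$ becomes, via the divergence theorem, $\int_{\{u>t\}}f$ minus a boundary term that drops out, i.e.\ essentially $\int_{\{u(\cdot,y)>t\}}f(x,y)\,dx$ integrated in $y$; the right-hand side $\int\nabla u^\#\cdot\nabla W$ becomes, by the coarea formula, $-\frac{d}{dt}\int_{\{u^\#>t\}}|\nabla u^\#|\,$-type expression, which after the Schwarz inequality and the fact that level sets of $u^\#$ are balls reduces to a bound involving $\big(-\frac{d}{dt}\mu(t)\big)$ and the perimeter of a ball of the right radius. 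Matching this with what \eqref{PstarSt} gives for $v$ (where the analogous computation is an \emph{equality} because $v=v^\#$ and $f^\#$ is already symmetrized) yields the comparison of the derivatives of the respective mass functions, hence of the mass functions themselves, hence \eqref{eq:wPS}'s consequence $\Phi_u(r,y)\le\Phi_v(r,y)$ after integrating up from $r=0$ using that both sides vanish there.

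The main obstacle I expect is the slicing/approximation step: Theorem \ref{wPS} is stated for a bounded domain with $u\in C^2(\O)\cap C(\overline\O)$, whereas the weak solution $u$ of \eqref{prSt} is only $W^{1,2}_0(\O)$ with $f\in L^q$, $q>N/2$. One must first regularize --- exhaust $\O$ by smooth domains $\O_k\Subset\O$, solve $-\Delta u_k=f_k$ with mollified $f_k$, invoke elliptic regularity to get $u_k\in C^2(\O_k)\cap C(\overline{\O_k})$, and then pass to the limit in the integral inequalities, checking that the rearrangements $u_k^\#$ converge suitably (e.g.\ in $L^1$ and a.e., using that rearrangement is an $L^p$-contraction) so that $\Phi_{u_k}\to\Phi_u$ and similarly for the right-hand sides. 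A secondary technical point is the measurability in $y$ and the validity of Fubini for the sliced level sets, which requires knowing that $u^\#$ is a genuine function of $(x,y)$ with the claimed level-set structure; this is exactly the content of the definitions in Section \ref{sec:defsch} and of Theorem \ref{main}/\ref{wPS}, so it can be quoted. Finally, one should verify that the exterior sphere condition is used only to guarantee barrier constructions forcing $u\in C(\overline\O)$ and $u=0$ on $\partial\O$ in the classical sense, so that no boundary flux term survives when applying the divergence theorem to $-\int w\,\Delta u$.
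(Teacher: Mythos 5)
Your outer scaffolding (regularize the data so that Theorem \ref{wPS} applies, choose a symmetric profile $W$, compare with the symmetrized problem) starts and ends like the paper's argument, but the core of your proof --- the Talenti-type truncation/differential-inequality machinery --- does not go through for Steiner symmetrization, and this is a genuine gap. First, the conclusion is a \emph{two}-parameter family of inequalities indexed by $(r,y)$: for a.e.\ $y$ and every $r$ you must compare the concentrations of the slices $u^\#(\cdot,y)$ and $v^\#(\cdot,y)$. Testing only with the one-parameter family $W=\min\{(u^\#-t)_+,h\}/h$ yields, after $h\to 0$, a single scalar inequality for each level $t$; integrating in $t$ can only produce global information and cannot separate the $y$-slices. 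Second, the step where you ``reduce to a bound involving $-\mu'(t)$ and the perimeter of a ball'' is exactly where the Schwarz argument uses that the level sets of $u^\star$ are balls with $|\nabla u^\star|$ constant on them. For Steiner symmetrization the level sets $\{u^\#>t\}\subset\RR^N$ are \emph{not} balls, $|\nabla u^\#|$ is not constant on them, and the limit of $\frac1h\int_{\{t<u^\#<t+h\}}|\nabla u^\#|^2$ contains uncontrolled contributions from $\nabla_y u^\#$; no closed differential inequality in $t$ alone results. (This is precisely why the comparison of \cite{ADLT} is not pointwise and why its original proof is far more delicate than Talenti's.) Third, applying Theorem \ref{wPS} ``slice-by-slice in $y$'' controls only $-\int w\,\Delta_x u$; the equation involves the full Laplacian, and the terms $-\int w\,u_{y_iy_i}$ have no sign slice-wise --- they are handled only by the global inequality \eqref{eq:wPSy}, which requires $w$ and $W$ to be chosen as functions of $(x,y)$, not section by section.

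The paper's proof sidesteps all of this by duality. For an arbitrary nonnegative $h=h^\#\in C^\infty(\O^\#)$ it solves $-\Delta W=h$ in $\O^\#$ with $W=0$ on $\partial\O^\#$ (so $W=W^\#$), picks an extremal $w$ with $w^\#=W$ realizing equality \eqref{HL=}, and combines Theorem \ref{wPS} with the weak formulations of \eqref{prSt} and \eqref{PstarSt} and the Hardy--Littlewood inequality $\int_\O fw\le\int_{\O^\#}f^\#W$ to obtain $\int_{\O^\#}(u^\#-v)\,h\le 0$ for every such $h$; the slice-wise concentration comparison then follows from the equivalence in Proposition \ref{ALT}. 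In other words, one must let the symmetric test datum $h$ (equivalently $W$) depend on both $x$ and $y$ and invoke the duality of Proposition \ref{ALT}, rather than integrate a one-dimensional family of truncations. Your regularization step (the paper reduces to analytic $f$, hence analytic $u$, by approximation) is in the right spirit and unproblematic; it is the middle of the argument that needs to be replaced.
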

\noindent Note that all the previous results hold also for Schwarz symmetrization, with appropriate modifications. In such a case  the absence of the $y$-variables allows to recover  the well-known {\sl pointwise} comparison result which is due to Talenti (see \cite{Talentinonlin}). 
The case of Schwarz symmetrizaton will be treated in Section \ref{sec:der} where also nonlinear problems are considered.

\section{Notation and preliminary results}
\label{sec:defsch}
In this section we introduce some notations, and we recall some well-known results which will be used in the sequel.

Let  $\RR^{N}$, $N \ge 1$, be the Euclidean space and let $E$ be a measurable subset of $\RR^N$.   The $N$-dimensional Lebesgue measure of the set $E$ is denoted by $\LL^{N}(E)$, while for any $d\ge 0$, $\HH^d(E)$ denotes its $d$-dimensional Hausdorff measure. The notation $|\cdot|$ denotes the standard Euclidean norm, independently from the dimension of the space.    
 
Let $\O$ be an open  subset of $\RR^{N}$, $N\ge 1$, and let $u$ be a nonnegative measurable  function on $\O$. Its {\sl distribution function}  is given by
$$
\m_u(t)=\LL^{N}\left (\{x\in\O :u(x)>t\}\right )\qquad t\in[0,+\infty )\,,
$$
and its   {\sl decreasing rearrangement} is defined as
$$
u^*(s)=\sup\{t\ge 0: \m_u (t)>s\}\, , \quad s\in (0,\LL^{N}(\O)]\,.
$$
We denote by $\O^\star$ the  ball of $\RR^n$ centered at the origin and having the same ${\mathcal L}^n$-measure as $\O$.  The  {\sl Schwarz rearrangement} of $u$, is given by
$$
u^\star(x)=u^*(\on|x|^N)\qquad x\in\O^\star\,,
$$
where  $\on$ is the measure of the $n$-dimensional unit ball. 

If $N\ge 2$, let $n,m\in \mathbb{N} $ be such that $n+m=N$, and decompose every $z\in\RR^{N}$  by  $z=(x,y)$, with $x\in\RR^n$ and $y\in\RR^m$. Accordingly, the gradient $\nabla u$ of  a function $u$ is the pair $ (\nabla_xu, \nabla_yu)$,  where
$\nabla_x u=\left (\frac{\partial u}{\partial x_1}, \dots, \frac{\partial u}{\partial x_n}\right )$ and
$\nabla_y u = \left (\frac{\partial u}{\partial y_1}, \dots , \frac{\partial u}{\partial y_m}\right )$.
\\
For any $y\in\RR^m$, let $\O_y$ be the $y$-section of $\O$ which is defined by 
$$
\O_y :=\{x\in\RR^n: (x,y)\in\O\}\, , \quad y\in \RR^m\,.
$$
The {\sl distribution function (in codimension $n$)} of $u$ and its   {\sl decreasing rearrangement (in codimension $n$)} are defined as
$$
\m_u(t,y)=\Ln\left (\{x\in\O_y :u(x,y)>t\}\right ), \qquad (t,y)\in[0,+\infty )\times\RR^m\,,
$$
and
$$
u^*(s,y)=\sup\{t\ge 0: \m_u (t,y)>s\}\, , \quad (s,y)\in (0,\Ln(\O_y)]\times\RR^m\, ,
$$
respectively.
By $\O^\#$  we denote the open set in $\RR^{N}$ such that, for any $y\in\RR^m$, its $y$-section $(\O^\#)_y$ is the $n$-dimensional ball   centered at the origin
and having the same ${\mathcal L}^n$-measure as $\O_y$. The  {\sl Steiner symmetrization (in codimension $n$)} of $u$, is given by
\begin{equation}\label{stsc}
u^\#(x,y)=(u(\cdot,y))^\star(x)=u^*(\on|x|^n,y) \qquad (x,y)\in\O^\#\,.
\end{equation}

It is well known that if $u\in W_0^{1,p}(\O)$, for some $1\le p\le \infty$, then also $u^\#\in W_0^{1,p}(\O^\#)$, and the $L^p$ norm is preserved while the $W^{1,p}$ norm is reduced (see for example \cite{BaeTa, BLM, Br1, Bur2} and the references therein).

The Hardy-Littlewood inequality with respect to the Schwarz rearrangement states that if $u$ and $w$ are nonnegative measurable function on a bounded  open set $\O$ of $\RR^n$, $n\ge 1$, then 
\begin{equation}\label{HLS}
\int_\O u(z)w(z)dz\le \int_{\O^\star}u^\star(z)w^\star(z)dz\,.
\end{equation}
Furthermore, the  Riesz inequality states that
\begin{equation}\label{RS}
\int_{\mathbb{R}^{2N} }\!\!u(x)w(z)h(x-z)\,dxdz
 \le \int_{\mathbb{R}^{2N} }\!\! u^{\star}(x)w^{\star}(z)h^\star (x-z)\, dxdz\,.
\end{equation}
for any triple $u,w,h$ of  nonnegative measurable functions on $\mathbb{R}^N $ for which the right-hand side is finite.  

\noindent In the following we are interested in the situation where equality in \eqref{HLS} is achieved. Let $u$ and $W=W^\star$ be two given nonnegative measurable functions, defined in $\Omega$ and $\Omega^\star$, respectively. We will say that a function $w$, satisfying $w^{\star} =W$, is an {\it extremal} for \eqref{HLS}, if it produces equality in \eqref{HLS}, that is
\begin{equation}\label{HLS=}
\int_\O u(x)w(x)dx= \int_{\O^\star}u^\star(x)W(x)dx\,.
\end{equation}
Extremals of \eqref{HLS} have been completely characterized (see, for example \cite{ALT1, Bur2, C1, CiFe1}). In particular,  an extremal $w$ always exists. However, it is not unique in general.   Furthermore,  equality (\ref{HLS=}) holds if and only if 
 the level sets  of $u$ and the level sets of $w$ are  mutually nested, that is,  for any choice of values $t, \tau$ there holds 
\begin{eqnarray*}
\mbox{either } & & \{x: u(x)>t\} \subset \{x: w(x)>\tau\},
\\
\mbox{ or } & &  
\{x: w(x)>\tau \} \subset \{x: u(x)>t\}. 
\end{eqnarray*}
An equivalent condition is  
\begin{equation*}
\left( u(x ) - u(x') \right) \left( w(x)-w(x') \right) \geq 0 \ \mbox{ for a.e. $(x,x') \in \Omega \times \Omega $\,.}
\end{equation*} 
The Hardy-Littlewood inequality \eqref{HL} for Steiner symmetrization can be easily recovered by the one for Schwarz symmetrization. Indeed, if $N\ge 2$,  we easily deduce from \eqref{HLS},
$$
\int_{\O_y} u(x,y)w(x,y)dx\le \int_{\O_y^\star}(u(\cdot,y))^\star(x)(w(\cdot,y))^\star(x)dx\quad \mbox{for a.e. } y\in\RR^m\,, 
$$
which immediately implies \eqref{HL}, thanks to \eqref{stsc}. 

Finally, we recall the following well-known result of \cite{ALT1}.
\begin{proposition}\label{ALT} Let $\O$ be a bounded domain of $\RR^{N}$, and let $u,v\in L^1 (\O) $  be two nonnegative functions. Then we have  for a.e. $y\in \RR^m$, 
$$ \int_0^s u^*(s,y)ds\le \int_0^s v^*(s,y)ds \quad \mbox{for } s\in[0, \Ln(\O_y)]\, ,
$$
if and only if
$$\int_{\O^\#}u^\#(x,y)h(x,y)dxdy\le\int_{\O^\#}v^\#(x,y)h(x,y)dxdy\,,$$
for every nonnegative function $h=h^\#$ belonging to $L^\infty(\O^\#)$.

\end{proposition}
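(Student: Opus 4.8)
The plan is to reduce both conditions to a duality statement about the concave primitives
$U(s,y):=\int_0^s u^*(\sigma,y)\,d\sigma$ and $V(s,y):=\int_0^s v^*(\sigma,y)\,d\sigma$,
$s\in[0,\Ln(\O_y)]$, and then to pair these against the class of admissible test functions. First I would set up the dictionary between the test class and a one-dimensional object: a nonnegative $h\in L^\infty(\O^\#)$ satisfies $h=h^\#$ if and only if, for a.e.\ $y$, $x\mapsto h(x,y)$ is a radially nonincreasing function on the ball $(\O^\#)_y$, i.e.\ $h(x,y)=k(\on|x|^n,y)$ for some $k=k(s,y)$ that is nonnegative, bounded, jointly measurable, and nonincreasing in $s$. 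Using \eqref{stsc} together with the elementary identity $\int_{(\O^\#)_y}\phi(\on|x|^n)\,dx=\int_0^{\Ln(\O_y)}\phi(s)\,ds$ (valid because $(\O^\#)_y$ is the ball of volume $\Ln(\O_y)$) one gets, for a.e.\ $y$, $\int_{(\O^\#)_y}u^\#(x,y)h(x,y)\,dx=\int_0^{\Ln(\O_y)}u^*(s,y)k(s,y)\,ds$ and likewise for $v$, so by Fubini the right-hand inequality in the statement is equivalent to
\[
\int_{\RR^m}\!\int_0^{\Ln(\O_y)}\big(v^*(s,y)-u^*(s,y)\big)\,k(s,y)\,ds\,dy\ \ge\ 0
\]
for every admissible $k$. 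Note also that, by equimeasurability within each section and Fubini, $U(\Ln(\O_y),y)=\int_{\O_y}u(\cdot,y)\,dx<\infty$ for a.e.\ $y$ (and similarly for $v$), so $s\mapsto U(s,y)$, $V(s,y)$ are absolutely continuous and nondecreasing on $[0,\Ln(\O_y)]$ for a.e.\ $y$.

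For the ``if'' direction I would prove the reduced inequality section by section. Fixing an admissible $k$ and a good $y$, I would write $k(\cdot,y)$ via its layer-cake representation $k(s,y)=\int_0^\infty\mathbf 1_{\{k(\cdot,y)>\lambda\}}(s)\,d\lambda$; since $k(\cdot,y)$ is nonincreasing, each superlevel set $\{s:k(s,y)>\lambda\}$ is an initial interval $[0,\ell(\lambda,y))$, whence $\int_0^{\Ln(\O_y)}u^*(s,y)\mathbf 1_{\{k(\cdot,y)>\lambda\}}(s)\,ds=U(\ell(\lambda,y),y)$. Tonelli's theorem then gives $\int_0^{\Ln(\O_y)}u^*(s,y)k(s,y)\,ds=\int_0^\infty U(\ell(\lambda,y),y)\,d\lambda$, and likewise with $V$ in place of $U$; the hypothesis $U(\cdot,y)\le V(\cdot,y)$ closes the section-wise estimate, and integrating in $y$ yields the reduced inequality. (This layer-cake device replaces, in a clean way, an integration by parts against the measure $-dk$.)

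For the ``only if'' direction I would test the reduced inequality against the special functions $k(s,y)=\mathbf 1_{[0,s_0)}(s)\,\mathbf 1_A(y)$, with $s_0>0$ fixed and $A\subset\RR^m$ an arbitrary measurable set of finite positive measure; these are admissible (they correspond to indicators of cylinders intersected with $\O^\#$). Since $\int_0^{\Ln(\O_y)}u^*(s,y)\mathbf 1_{[0,s_0)}(s)\,ds=U\big(\min\{s_0,\Ln(\O_y)\},y\big)$, the tested inequality reads $\int_A U(\min\{s_0,\Ln(\O_y)\},y)\,dy\le\int_A V(\min\{s_0,\Ln(\O_y)\},y)\,dy$; as $A$ is arbitrary, this forces $U(\min\{s_0,\Ln(\O_y)\},y)\le V(\min\{s_0,\Ln(\O_y)\},y)$ for a.e.\ $y$. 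Letting $s_0$ range over $\mathbb Q\cap(0,\infty)$ and taking the union of the countably many exceptional null sets, I would then use the continuity of $s\mapsto U(s,y)$, $V(s,y)$ to pass to every $s\in[0,\Ln(\O_y)]$, obtaining the desired concentration inequality for a.e.\ $y$.

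The argument involves no deep obstacle; the step that demands the most care is the reduction in the first paragraph, namely checking that $h\leftrightarrow k$ really is a bijection respecting joint measurability, and that the section-wise change of variables and the Fubini applications are legitimate for a.e.\ $y$ (the hypotheses $u,v\in L^1(\O)$ and $h\in L^\infty(\O^\#)$ are exactly what is needed here). Once this dictionary is in place, the two implications are short; a minor additional point is the density-plus-continuity argument used at the end of the ``only if'' direction, but it is entirely routine.
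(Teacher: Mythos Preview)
The paper does not actually prove this proposition: it merely recalls it as a well-known result of \cite{ALT1} and gives no argument. So there is no paper proof to compare against.

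Your proof is correct and follows the standard route for this kind of duality (rewrite the test class via the change of variables $s=\omega_n|x|^n$ as bounded functions $k(s,y)$ nonincreasing in $s$, then use the layer-cake decomposition in one direction and characteristic-function testing plus a countable density argument in the other). The only quibble is terminological: you have the labels ``if'' and ``only if'' reversed. In the statement ``$A$ if and only if $B$'' with $A$ the concentration inequality and $B$ the integral inequality against all $h=h^{\#}$, the ``if'' part is $B\Rightarrow A$, but the paragraph you call the ``if direction'' actually proves $A\Rightarrow B$ (you assume $U(\cdot,y)\le V(\cdot,y)$ and deduce the tested inequality), and conversely for the other paragraph. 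The mathematics is unaffected.
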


\section{New P\'olya-Szeg\"o type inequalities for Steiner symmetrization}
In this section we prove the Theorems 1.1 and 1.2.  
\medskip

\begin{proof} [{\it Proof of Theorem  \ref{main}}]  Let us first show inequality \eqref{PSgx}.
For convenience, we extend  $u$ and $w$ by zero outside of $\O$. Let $h\in\RR^n$ be such that $|h|\le 1$. Since $u$ is a Lipschitz function, it is differentiable a.e. Hence
\begin{equation}\label{point}
\lim_{\varepsilon \to 0}
\frac{u(x+\epsilon h,y )- u(x,y)}{\varepsilon}=\nabla_x u(x,y)\cdot h\,, \quad \hbox{for a.e. }x\in\RR^n\, ,
\end{equation}
and 
\begin{equation}\label{point2}
\frac{|u(x+\epsilon h,y )-u(x,y)|}{\varepsilon}\le \|\nabla_x u\|_{L^\infty(\RR^N)}\,|h|\,,\qquad 0<\epsilon<\epsilon_0\,,
\end{equation}
for a suitable  $\epsilon_0>0$, and analogously for $w$. 
\noindent Let $B_1(0) $ denote the unit ball in $\mathbb{R}^n $, and let $\phi\in C^\infty_0(B_1(0) )$ be a radial and radially nonincreasing function.
%
By  \eqref{point},  \eqref{point2} and the Dominated Convergence Theorem it follows that
\begin{align} 
\label{grad}
  \lim_{\varepsilon \to 0}\int_{\RR^N}\int_{B_1(0)}\!\!
&\frac{(u(x+\epsilon h,y )-u(x,y))\,(w(x+\epsilon h,y )-w(x,y))}{\varepsilon^2}\,\phi (h)\, dhdxdy\notag
\\
 &= \int_{\RR^N}\int_{B_1(0)}
(\nabla_x u(z)\cdot h)(\nabla_x w(z)\cdot h)\, \phi (h) \,dhdz \\
\nonumber
 & =  \displaystyle\sum_{i,j=1}^n
\int_{\RR^N}u_{x_i}(z) \,w_{x_j}(z)\left (\int_{B_1(0)}
\phi(h)\,h_ih_j \,dh\right )dz\,. 
\end{align}
Since $\phi$ is radial, we deduce that
$$
\int_{B_1(0)}\phi (h)\,h_ih_j \, dh=0 \quad \mathop{\rm for\>} i\neq j\,,
$$
and for $i=1,\dots , n\, $ we have
\begin{equation}\label{C}
\int_{B_1(0)}\phi (h)\,h_i^2\,dh=\frac{1}{n}\int_{B_1(0)}\phi (h)\, |h|^2 \,dh=\frac {C}n\,,
\end{equation} 
where 
\begin{equation}\label{C1}
 C:= \int_{B_1(0)}\phi (h)\, |h|^2 \,dh\,.
\end{equation}
From \eqref{grad}-\eqref{C1} we obtain
\begin{align}
\label{PSx}
&\int_{\RR^N}\nabla_xu(z)\cdot\nabla_xw(z)\,dz\\
\nonumber 
= \frac Cn\lim_{\epsilon\rightarrow 0}
\int_{\RR^N}\!\! \int_{B_1(0)}&\!\!\!\!
\frac{(u(x+\epsilon h, y )-u(x,y))(w(x+\epsilon h, y )-w(x,y))}{\epsilon^2}\phi(h)dhdxdy\,.
\end{align}
On the other hand, since $\phi = \phi ^{\star}$ we get  by Riesz' inequality for a.e. $y\in\RR^m$ 
\begin{align*}
\int_{\RR^m} \int_{B_1(0)} &u(x+\epsilon h, y)w(x,y)\phi(h)dhdx\\
&\le\int_{\RR^m} \int_{B_1(0)} (u(\cdot, y))^\star (x+\epsilon h)(w(\cdot,y))^\star\phi(h)dhdx\, .
\end{align*}
By integrating this w.r.t. $y$ and recalling the definition of \eqref{stsc}, this leads to
\begin{align}\label{riesz1}
\int_{\RR^N} &\int_{B_1(0)} u(x+\epsilon h,y)w(x, y)\phi(h)dxdydh\\
&\le\int_{\RR^N} \int_{B_1(0)}u^\#(x+\epsilon h,y)w^\#(x, y)\phi(h)dxdydh\,.\notag
\end{align}
Similarly, we deduce
\begin{align}\label{riesz1}
\int_{\RR^N} &\int_{B_1(0)} u(x,y)w(x+\epsilon h, y)\phi(h)dxdydh\\
&\le\int_{\RR^N} \int_{B_1(0)}u^\#(x,y)w^\#(x+\epsilon h, y)\phi(h)dxdydh\,.\notag
\end{align}
Furthermore, since $u$ and $w$ satisfy  \eqref{HL=}, we find
\begin{align}\label{eq1}
\int_{\RR^N} u(x+\epsilon h, y)&w(x+\epsilon h, y)\, dxdy
\\
\notag &=
\int_{\RR^N}  u^\# (x+\epsilon h, y)w^\#(x+\epsilon h, y)\, dxdy\,,
\end{align}
and similarly,
\begin{equation}
\label{eq2}
\int_{\RR^N}  u(x, y)w(x, y) \, dxdy=
\int_{\RR^N} u^\# (x, y)w^\#(x, y) \,dxdy.
\end{equation}
Collecting \eqref{riesz1}-\eqref{eq2},  we get
\begin{eqnarray*}
 & & \int_{\rN}\ \int_{B_1}\!\!\!\!
\frac{(u(x+\epsilon h, y)-u(x, y))(w(x+\epsilon h, y)-w(x, y))}{\epsilon^2}\phi(h)dhdxdy\\
&
\ge& \int_{\rN}\ \int_{B_1}\!\!\!\!
\frac{(u^\#(x+\epsilon h, y)-u^\#(x, y))(w^\#(x+\epsilon h, y)-w^\#(x, y))}{\epsilon^2}\phi(h)dhdxdy\,.
\end{eqnarray*}
Finally, passing to the limit $\epsilon \to 0$ and using \eqref{PSx} we obtain \eqref{PSgx}.

It remains to prove \eqref{PSgy}. Fix $i\in \{ 1, \ldots ,m\} $, and let $e_i$ denote the unit vector of $\mathbb{R} ^m$ in the positive $y_i $-direction. Then
\begin{eqnarray}
\label{yi} 
& & \int_{\rN}u_{y_i }(x,y) w_{y_i }(x,y)\,dxdy
\\
\nonumber 
&= & \lim_{\epsilon \to 0} 
\int_{\rN} \frac{
(u(x, y+ \epsilon e_i )-u(x,y))(w(x, y+ \epsilon e_i )-w(x,y))}{\epsilon ^2} \,dxdy\, .
\end{eqnarray}
An analogous relation holds  for $u^\# $ and $w^\# $ in place of $u$ and $w$. Now,
(\ref{HL}) yields
\begin{equation}
\label{yi1}
\int\limits_{\rN} u(x, y+ \epsilon e_i )w(x, y ) \,dxdy
 \leq  
\int\limits_{\rN} u^\#(x, y+ \epsilon e_i )w^\#(x, y ) \,dxdy 
\end{equation}
and 
\begin{equation}
\label{yi2}
\int\limits_{\rN} u(x, y) w(x,y+ \epsilon e_i ) \,dxdy
 \leq 
\int\limits_{\rN} u^\#(x, y)w^\# (x,y+ \epsilon e_i ) \,dxdy ,
\end{equation}
while (\ref{HL=}) gives
\begin{eqnarray}
\label{yi3}
 & & \int\limits_{\rN} u(x, y+ \epsilon\, e_i ) w(x, y+\epsilon\, e_i  ) \,dxdy
 \\ 
\nonumber
 & = &  
\int\limits_{\rN} u^\#(x, y+ \epsilon e_i )w^\#(x, y +\epsilon e_i ) \,dxdy\, . 
\end{eqnarray}
Now inequality (\ref{PSgy}) follows from 
(\ref{yi})-(\ref{yi3}).
\end{proof}

For the proof of  Theorem \ref{wPS} we will need the following result.
\begin{theorem} 
Under the assumptions of Theorem \ref{wPS}, there holds
\begin{equation}\label{eq:wPSx}
-\!\int_{\O } w(x,y)\Delta_x u(x,y)dxdy\ge \int_{\O^\#} \nabla_x W(x,y)\cdot\nabla_x u^\#(x,y)dxdy\, ,
\end{equation}
and for any $i=1, \dots, m$,
\begin{equation}\label{eq:wPSy}
-\int_{\O } w(x,y) u_{y_iy_i}(x,y)dxdy\ge \int_{\O^\#}  W_{y_i}(x,y)\cdot u_{y_i}^\#(x) dxdy\,.
\end{equation}
\end{theorem}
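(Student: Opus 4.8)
The plan is to mimic the discretization argument from the proof of Theorem \ref{main}, but exploiting the extra regularity of $u$ (namely $u\in C^2$) to pass the second difference quotient onto $u$ alone, so that $w$ need not be differentiable. Concretely, for \eqref{eq:wPSx} I would fix a radial, radially nonincreasing $\phi\in C_0^\infty(B_1(0))$ with $\int_{B_1(0)}\phi(h)\,|h|^2\,dh=C$ as in \eqref{C1}, extend $u$ and $w$ by zero outside $\O$, and start from the identity obtained by a second-order Taylor expansion of $u$ in the $x$-variables: for a.e. $z=(x,y)$,
\[
\lim_{\epsilon\to0}\int_{B_1(0)}\frac{u(x+\epsilon h,y)+u(x-\epsilon h,y)-2u(x,y)}{\epsilon^2}\,\phi(h)\,dh=\frac{C}{n}\,\Delta_x u(x,y),
\]
the convergence being dominated by $\|D^2_x u\|_{L^\infty}$ on the support region (here the $C^2(\O)\cap C(\overline\O)$ hypothesis together with compactness of $\overline\O$ is used). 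Multiplying by $w(x,y)$, integrating over $\RR^N$, and using the dominated convergence theorem gives
\[
\frac{C}{n}\int_{\O}w(x,y)\,\Delta_x u(x,y)\,dxdy=\lim_{\epsilon\to0}\int_{\RR^N}\int_{B_1(0)}\frac{u(x+\epsilon h,y)+u(x-\epsilon h,y)-2u(x,y)}{\epsilon^2}\,w(x,y)\,\phi(h)\,dh\,dxdy.
\]

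Next I would estimate the right-hand side from above. Since $\phi=\phi^\star$, Riesz' inequality \eqref{RS} applied on each $y$-section (exactly as in the derivation of \eqref{riesz1} in the proof of Theorem \ref{main}), followed by integration in $y$ and the change of variables $x\mapsto x\mp\epsilon h$, yields
\[
\int_{\RR^N}\!\int_{B_1(0)}\!u(x\pm\epsilon h,y)\,w(x,y)\,\phi(h)\,dh\,dxdy\le\int_{\RR^N}\!\int_{B_1(0)}\!u^\#(x\pm\epsilon h,y)\,w^\#(x,y)\,\phi(h)\,dh\,dxdy,
\]
while the hypothesis \eqref{HL=} with $w^\#=W$ gives the exact identity
\[
\int_{\RR^N}\int_{B_1(0)}u(x,y)\,w(x,y)\,\phi(h)\,dh\,dxdy=\int_{\RR^N}\int_{B_1(0)}u^\#(x,y)\,w^\#(x,y)\,\phi(h)\,dh\,dxdy.
\]
Adding the two Riesz inequalities (for $+\epsilon h$ and $-\epsilon h$) and subtracting twice the equality, the second-difference integrand for $(u,w)$ is bounded above by the corresponding one for $(u^\#,w^\#)$; dividing by $\epsilon^2$, integrating and letting $\epsilon\to0$ we get
\[
-\frac{C}{n}\int_{\O}w\,\Delta_x u\,dxdy\ge-\lim_{\epsilon\to0}\int_{\RR^N}\int_{B_1(0)}\frac{u^\#(x+\epsilon h,y)+u^\#(x-\epsilon h,y)-2u^\#(x,y)}{\epsilon^2}\,w^\#(x,y)\,\phi(h)\,dh\,dxdy.
\]
Finally, on the right-hand side I would move one difference back onto $w^\#=W\in W_0^{1,\infty}$: writing the numerator as $[u^\#(x+\epsilon h,y)-u^\#(x,y)]-[u^\#(x,y)-u^\#(x-\epsilon h,y)]$, multiplying by $W(x,y)$, and shifting $x$ in the second bracket, the expression equals $\int\int\frac{(u^\#(x+\epsilon h,y)-u^\#(x,y))(W(x,y)-W(x+\epsilon h,y))}{\epsilon^2}\,\phi(h)$, whose limit is $-\frac{C}{n}\int_{\O^\#}\nabla_x W\cdot\nabla_x u^\#\,dxdy$ by the same computation \eqref{grad}--\eqref{C} used in Theorem \ref{main} (using that $u^\#$ is Lipschitz in $x$, since $u^\#\in W_0^{1,\infty}(\O^\#)$). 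Dividing through by $C/n>0$ yields \eqref{eq:wPSx}. For \eqref{eq:wPSy} the argument is the same with the $x$-translations replaced by $\pm\epsilon e_i$ in the $y_i$-direction, using the Hardy–Littlewood inequality \eqref{HL} in place of Riesz (exactly as \eqref{yi1}--\eqref{yi3} replace \eqref{riesz1}), and no mollifier in $h$ is needed.

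The main obstacle I anticipate is justifying the passages to the limit: one must ensure the second-difference quotients of $u$ (resp. $u^\#$) against the fixed test functions are dominated uniformly in $\epsilon$ so that dominated convergence applies, and one must be careful that $u^\#$ is only Lipschitz (not $C^2$), so the $u^\#$-side limit has to be reorganized onto the Lipschitz factor $W$ before passing to the limit, rather than differentiating $u^\#$ twice. A secondary point is handling the boundary: since $u$ vanishes on $\partial\O$ and $W$ on $\partial\O^\#$, the zero extensions are Lipschitz near $\partial\O^\#$ but $u$ need not be $C^2$ up to $\partial\O$, so the uniform bound on the difference quotients of $u$ should be obtained on a neighborhood of $\supp w\subset\overline\O$ using $u\in C^2(\O)\cap C(\overline\O)$ and, if necessary, an approximation of $\O$ from inside; this is routine but must be stated carefully.
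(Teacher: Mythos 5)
Your proposal is correct and follows essentially the same route as the paper: symmetric second-order difference quotients of $u$ tested against a radial mollifier, Riesz's inequality on $y$-sections combined with the Hardy--Littlewood equality \eqref{HL=}, and a change of variables shifting one difference onto the Lipschitz function $W$ before passing to the limit. The boundary issue you flag at the end is resolved in the paper by truncation: one works with $w_\delta=(w-\delta)_+$ and $W_\delta=(W-\delta)_+$, which are compactly supported in $\O$ and $\O^\#$ and still satisfy the Hardy--Littlewood equality, and then lets $\delta\to 0$.
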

\begin{proof}   
Let us first prove inequality \eqref{eq:wPSx}.  Let  $\phi\in C_c^\infty(\RR^n)$ be a radial function, compactly supported in the unit ball of $\RR^n$ and let $C$ be the constant defined in \eqref{C}. Then
\begin{align}\label{lap1}
\frac Cn&\Delta_x u(x,y) \\
&= \lim_{\varepsilon \to 0} \int_{\RR^n}\!\frac{u(x+\varepsilon h,y)-2u(x,y)+u(x-\varepsilon h,y)}{\varepsilon^2} \, \phi(h)\, dh\,,\notag
\end{align}
for any $(x,y)\in\O$, with uniform convergence on compact subsets of $\O$. To see that, let 
 $(x,y)\in\Omega$ and choose $\varepsilon_0>0$ small enough such that $x+\varepsilon h\in\O $ and $x-\varepsilon h\in \Omega $ for every $\varepsilon\in (0, \varepsilon_0 )$ and for every $h\in B_1(0)$. Then a Taylor expansion gives
\begin{align*}
D^2_x u(x,y,\epsilon)&=\frac {u(x+\varepsilon h,y)-2u(x,y)+u(x-\varepsilon h,y)}{\varepsilon^2}\\
&=\sum_{i,j=1}^nu_{x_ix_j}(x,y)h_ih_j+o(1)\,,
\end{align*}
with uniform convergence on compact subsets of $\O$.  (Here: $\lim_{\varepsilon \to 0}o(1)=0\,.$)
Since  \eqref{C} holds for every
 nonnegative radial function  $\phi$, we have
\begin{align*}
 \int_{B_1(0)}
D^2_x u(x,y,\epsilon)\phi (h)dh 
 &=\sum_{i=1}^n
\int_{B_1(0)}u_{x_ix_i}(z)h_i^2\phi(h)dh+o(1) \\&= \frac C n \Delta u (x,y) + o(1)\,,
\end{align*}
from which \eqref{lap1} immediately follows on letting $\epsilon$ go to zero.
For any $\delta>0$, let $\delta'$ be such that
$$
\{W>\delta\}\subset \subset  \{u^\#>\delta'\}\,,
$$
and set $W_\delta=(W-\delta)_+$ and  $w_\delta=(w-\delta)_+$.  It is easy to check that  $w_\delta$ is compactly supported,  $(w_\delta)^\#=W_\delta$ and 
$$
\int_\O u(z)w_\delta(z)dz=\int_{\O^\star} u^\#(z)W_\delta (z)dz\,.
$$
In view of the uniform convergence on compact sets in \eqref{lap1}, we deduce
\begin{align}\label{lap2}
 -\lim_{\varepsilon \to 0}\int_{\Omega}\int_{B_1(0)}D^2_x u(x,y,\epsilon)&w_\delta(x,y)\phi(h) dxdydh\\
&=-\int_{\Omega}w_\delta(z)\Delta_x u(z) \,dz\notag\,.
\end{align}
On the other hand, arguing as in the proof of Theorem \ref{main}, we get
\begin{align}\label{lap3}
\int_{\Omega}\int_{B_1(0)} &D^2_xu(x,y,\epsilon)  w_\delta(x,y)\phi(h) dxdy dh\\
\le &\int_{\Omega^\#}\int_{B_1(0)}D^2_xu^\#(x,y,\epsilon) \, W_\delta(x,y)\phi(h)\, dxdy dh\notag\,.
\end{align}
Furthermore, a standard change of variables gives
\begin{align}\label{lap34}
\int_{\O^\#}\int_{B_1(0)}D^2_xu^\#(x,y,\epsilon) W_\delta(x,y)\phi(h)&\, dxdy dh\\
\notag =\!\int_{\Omega^\#}\!\!\int_{B_1(0)}\!\!\frac{(u^\#(x+\varepsilon h,y)-u^\#(x,y))(W_\delta(x+\varepsilon h,y)-W_\delta(x,y))}{\epsilon^2}& \phi(h)\, dxdy dh\,.
\end{align}
Finally, collecting \eqref{lap2}-\eqref{lap34} we obtain
$$
-\int_{\Omega}w_\delta(z)\Delta_x u(z) \,dz\ge \int_{\Omega^\#}\nabla u^\#\cdot \nabla W_\delta\, dz\,,
$$
which leads to the thesis on letting $\delta$  go to zero. Inequality \eqref{eq:wPSy} follows in a similar way.
\end{proof}
Finally, it remains to prove the comparison result of Theorem \ref{compnonlinSt}. 

\begin{proof} [{\it Proof of Theorem  \ref{compnonlinSt}}]  By a standard approximation argument it is enough to prove our result when the datum $f$ is analytic (see, for example, \cite{CM}, \cite{Ch}). This implies that the solution $u$ is analytic too. 
Let $h\in C^\infty (\O^\#)$, be such that $h=h^\#$ and consider the solution to the problem
\begin{equation}\label{st0}
\begin{cases}
-\Delta W = h&\mbox{ in } \O^\#\\
W=0 &\mbox{on } \partial\O^\#\,.
\end{cases}
\end{equation}
We have that $W\in C^\infty(\O^\#)$,  $W=W^\#$ and, by \eqref{PstarSt},  we get
\begin{equation}\label{st1}
\int_{\O^\#} f^\#(x,y)\, W(x,y)dxdy= \int_{\O^\#}\nabla v(x,y)\cdot\nabla W(x,y)dxdy\,.
\end{equation}
On the other hand, if $w$ is a function satisfying \eqref{HL=} such that $w^\#=W$, then by Theorem \ref{wPS} and \eqref{prSt} we get
\begin{align}\label{st2}
\int_\O f(x,y)w(x,y)dxdy&=-\int_\O \Delta u (x,y)w(x,y)dxdy\notag\\
&\ge \int_{\O^\#} \nabla u^\#(x,y)\cdot \nabla W(x,y)dxdy\,.
\end{align}
Collecting \eqref{st1} and \eqref{st2}, we obtain by the Hardy-Littlewood inequality,
\begin{equation}\label{st3}
\int_{\O^\#}\left [ \nabla u^\#(x,y)-\nabla v(x,y)\right ]\cdot\nabla W(x,y)dxdy\le 0\,,
\end{equation}
or, equivalently, by \eqref{st0},
\begin{align*}
\int_{\O^\#} \left [u^\#(x,y)-v(x,y)\right ]&\left (-\Delta W(x,y)\right )dxdy\\
&=\int_{\O^\#} \left [u^\#(x,y)-v(x,y)\right ]h(x,y)dxdy\le 0\,.
\end{align*}
By the arbitrariness of $h$ we deduce the thesis, applying Proposition \ref{ALT}.
\end{proof}

\section{Schwarz symmetrization for nonlinear problems: a new approach}
\label{sec:der}

In this Section we will adapt and modify the previous tools for Schwarz symmetrization. In this case, the gradients of the functions $ u^\star$ and $ w^\star$ are parallel, a fact which simplifies the approach a great deal. An analogue of Theorem \ref{main} for Schwarz symmetrization states as follows.
\begin{theorem} \label{mainSch}
Let $\O$ be an open set of $\RR^n$, $n\ge 1$, and let  $u,w\in W_0^{1,\infty}(\O)$  be  nonnegative functions such that
\begin{equation}\label{HLS2=}
\int_\O u(x)w(x)dx= \int_{\O^\star}u^\star(x)w^\star(x)\, dx\,,
\end{equation}
 then
\begin{equation}\label{PSg1}
\int_{\RR^n} \nabla u(x)\cdot \nabla w(x)\, dx\,\ge\, \int_{\RR^n} |\nabla u^\star(x)|\cdot | \nabla w^\star (x)|dx\,.
\end{equation}
\end{theorem}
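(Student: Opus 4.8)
The plan is to mimic the proof of Theorem \ref{main}, with the simplification that in the radial setting the two gradients $\nabla u^\star$ and $\nabla w^\star$ are everywhere parallel, so that $\nabla u^\star\cdot\nabla w^\star=|\nabla u^\star|\,|\nabla w^\star|$ pointwise and the right-hand side of \eqref{PSg1} is literally $\int \nabla u^\star\cdot\nabla w^\star$. Thus it suffices to establish the analogue of \eqref{PSgx} in the absence of the $y$-variables, namely $\int_{\RR^n}\nabla u\cdot\nabla w\,dx\ge\int_{\RR^n}\nabla u^\star\cdot\nabla w^\star\,dx$, and then observe that the radial structure upgrades the dot product on the right to the product of the norms.

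First I would fix a radial, radially nonincreasing $\phi\in C_c^\infty(B_1(0))$ and, using that $u,w$ are Lipschitz (hence differentiable a.e., with difference quotients bounded by the Lipschitz constants), apply the Dominated Convergence Theorem exactly as in \eqref{grad}--\eqref{C1} to obtain the identity
\begin{equation*}
\int_{\RR^n}\nabla u(x)\cdot\nabla w(x)\,dx=\frac{n}{C}\lim_{\epsilon\to 0}\int_{\RR^n}\int_{B_1(0)}\frac{(u(x+\epsilon h)-u(x))(w(x+\epsilon h)-w(x))}{\epsilon^2}\,\phi(h)\,dh\,dx\,,
\end{equation*}
together with the corresponding identity for $u^\star,w^\star$. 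Expanding the product in the integrand, the cross terms $\int u(x+\epsilon h)w(x)\phi(h)$ and $\int u(x)w(x+\epsilon h)\phi(h)$ are controlled from above, after passing to $u^\star,w^\star$, by Riesz' inequality \eqref{RS} (using $\phi=\phi^\star$), while the two diagonal terms $\int u(x+\epsilon h)w(x+\epsilon h)\phi(h)=\int u(x)w(x)\phi(h)$ and its companion are unchanged by the Schwarz rearrangement precisely because of the equality hypothesis \eqref{HLS2=}. Combining these four relations gives
\begin{equation*}
\int_{\RR^n}\int_{B_1(0)}\frac{(u(x+\epsilon h)-u(x))(w(x+\epsilon h)-w(x))}{\epsilon^2}\phi(h)\,dh\,dx\ge\int_{\RR^n}\int_{B_1(0)}\frac{(u^\star(x+\epsilon h)-u^\star(x))(w^\star(x+\epsilon h)-w^\star(x))}{\epsilon^2}\phi(h)\,dh\,dx\,,
\end{equation*}
and letting $\epsilon\to 0$ yields $\int\nabla u\cdot\nabla w\ge\int\nabla u^\star\cdot\nabla w^\star$.

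Finally I would record the pointwise fact that, since $u^\star$ and $w^\star$ are radial and radially nonincreasing, at a.e.\ point $x$ both gradients point in the direction $-x/|x|$ (one of them possibly vanishing), so $\nabla u^\star(x)\cdot\nabla w^\star(x)=|\nabla u^\star(x)|\,|\nabla w^\star(x)|$; substituting this into the inequality just proved gives \eqref{PSg1}. The main obstacle is the usual one in this circle of ideas: justifying that the limit $\epsilon\to 0$ may be taken inside on the rearranged side, i.e.\ that $u^\star,w^\star\in W^{1,\infty}$ with difference quotients dominated uniformly in $\epsilon$ — this is where one invokes that Steiner/Schwarz symmetrization maps $W_0^{1,\infty}$ into $W_0^{1,\infty}$ (cited in Section \ref{sec:defsch}) so that the Dominated Convergence argument applies verbatim to $u^\star,w^\star$ as well. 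Everything else is a direct transcription of the proof of Theorem \ref{main} with $m=0$.
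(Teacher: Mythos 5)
Your proposal is correct and follows essentially the same route the paper intends: the paper itself gives no separate proof of Theorem \ref{mainSch}, but explicitly reduces it to the argument of Theorem \ref{main} (difference quotients, Riesz' inequality for the cross terms, the equality hypothesis \eqref{HLS2=} for the diagonal terms) combined with the observation that $\nabla u^\star$ and $\nabla w^\star$ are parallel, so that $\nabla u^\star\cdot\nabla w^\star=|\nabla u^\star|\,|\nabla w^\star|$ a.e. You also correctly flag (and resolve) the only point needing care, namely that $u^\star,w^\star$ remain in $W^{1,\infty}$ so the dominated convergence step applies on the rearranged side, and your normalizing constant $n/C$ is in fact the right one.
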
 
As we already mentioned in Section \ref{sec:defsch}, functions which satisfy \eqref{HLS2=}, have been completely characterized. In particular,  
it has been observed in \cite{CiFe1}, Theorem 1.1, that the extremal functions $w$ in \eqref{HLS2=} are unique if and only if $u^*$ is strictly monotone.
As a consequence, any extremal $w$ is uniquely determined outside the flat zones of $u$, and it is given by 
\begin{equation}\label{rapw}
w(x)=w^*(\m_u (u(x)))\,,
\end{equation}
for a.e. $x$, such that $u(x)$ is a point of continuity for $\mu_u$. 
In particular, we deduce the uniqueness of such extremal if $w^*$ is constant where $u^*$ is constant.

Furthermore, 
if $w^*$ is a smooth function, then the classical result of Vall\' ee-Poussin on differentiability of composite functions tells us, that any function satisfying \eqref{HLS2=}   is differentiable at any $x$ such that $u(x)$ is a point of differentiability for $\mu_u$ and
 \begin{equation}\label{DW}
 \nabla w(x)=(w^*)'(\mu_u(u(x)))\, (\mu_u)'(u(x))\,\nabla u(x)\,.
 \end{equation}
The differentiability properties of $\mu_u$ have been studied in \cite{BZ, CF1}. In particular, if $u\in W^{1,1}_0(\O)$ and 
\begin{equation}\label{diffustar}
\Big |\Big \{s\in (0,\Ln(\O)): \, (u^*)'(s)=0
\quad 0<u^*(s)<\esup u\Big \}\Big |=0\,,
\end{equation}
then $\mu_u\in W^{1,1}((0,+\infty))$ and
\begin{equation}\label{dmut}
\mu_u'(t)=\frac 1 { ( u^*)'(s)_{|s=\mu_u(t)}}=-\frac {n\on^{\frac 1 n}\mu_u(t)^{1-\frac  1 n} } {|\nabla u^\star (x)|_{\{x:u^\star (x)=t\}} }\,.
\end{equation}

A stronger assumption than \eqref{diffustar}, which ensure the differentiability of $\mu_u$, is 
\begin{equation}\label{diffu}
\Big |\Big \{x\in \O: \, |\nabla u(x)|=0
\quad 0<u(x)<\esup u\Big \}\Big |=0\,.
\end{equation}
The following Lemma gives  sufficient conditions on $w^*$ which ensure the uniqueness and regularity of the  extremal $w$.
 
 \begin{lemma}\label{regularity}
Let $\O$ be an  bounded domain of $\RR^n$ and let $u\in W^{1,p}_0(\O)$, $1\le p<\infty$. Let $W: (0, \Ln(\O)] \to \mathbb{R} $ be a nonincreasing function belonging to $ W^{1,p}(a,\Ln(\O))$ for every $a>0$, such that $W(\Ln(\O))=0$
and 
\begin{equation}\label{key}
-W'(s)\le C(-u^*)'(s)\qquad for\>a.e.\> s\in (0,\Ln(\O))\,,
\end{equation}
for some positive constant $C$.
Then there exists only one  function $w\in W^{1,p}_0(\O)$ satisfying $w^{\star} =W$ and \eqref{HLS=}.
Moreover, $W \circ \mu _u : [0, +\infty ) \to \mathbb{R} $ is Lipschitz-continuous   and 
\begin{equation}\label{Dw}
\nabla w(x)=W'(\mu_u(u(x)) ) (\mu_u) '(u(x))\nabla u(x)\quad for\>\>a.e.\>\>x\in\O\,.				
\end{equation}
\end{lemma}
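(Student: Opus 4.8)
\textbf{Proof plan for Lemma \ref{regularity}.}
The plan is to reduce everything to one-dimensional statements about the decreasing rearrangements, exploiting that the extremal $w$ must have level sets nested with those of $u$. The starting point is the characterization of extremals recalled above: any $w$ with $w^\star=W$ satisfying \eqref{HLS=} has level sets mutually nested with those of $u$, so that $w$ is determined by $u$ up to its behaviour on the flat zones of $u$. Concretely, for a.e.\ $x$ with $u(x)$ a point of continuity of $\mu_u$ one has $w(x)=W(\mu_u(u(x)))$; the only freedom is on the set where $u$ is constant, i.e.\ where $\mu_u$ jumps. I would first observe that hypothesis \eqref{key} forces $W$ to be \emph{constant} on every interval $(s_1,s_2)$ on which $u^*$ is constant (there $(-u^*)'=0$, hence $-W'=0$), so composition with $\mu_u$ kills exactly the ambiguity: on a flat zone $\{u=c\}$ the value $w$ must take is the common value of $W$ on the corresponding interval of $s$-values, and this is forced regardless of how the level sets are arranged inside the flat zone. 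This gives uniqueness of $w$ as a measurable function, and the explicit formula $w=W\circ\mu_u\circ u$ a.e.

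Next I would establish that $g:=W\circ\mu_u:[0,+\infty)\to\RR$ is Lipschitz-continuous. The point is the chain of inequalities: on any interval of $t$-values, the increment of $g$ is controlled by the increment of $W$ along the corresponding $s=\mu_u(t)$-values, and \eqref{key} says $-W'\le C(-u^*)'$, i.e.\ $W$ increases (as $s$ decreases) no faster than $C u^*$. Since $u^*\circ\mu_u$ is $1$-Lipschitz in $t$ away from flat zones (indeed $u^*(\mu_u(t))\le t$ always, with equality at points of continuity of $\mu_u$), and $W$ is constant across the jumps of $\mu_u$ by the previous paragraph, one gets $|g(t_1)-g(t_2)|\le C|t_1-t_2|$ directly. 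Here I must be slightly careful: $\mu_u$ is only monotone, not absolutely continuous in general, so the argument must be phrased via the composition $W\circ\mu_u$ rather than by naively multiplying derivatives; the constancy of $W$ on flat zones of $u^*$ is precisely what makes $W\circ\mu_u$ insensitive to the singular (jump) part of $\mu_u$. I expect this to be the main obstacle — handling the lack of absolute continuity of $\mu_u$ and making the "flat zones cancel" argument rigorous.

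With $g=W\circ\mu_u$ Lipschitz, I would conclude $w=g\circ u\in W^{1,p}_0(\O)$ by the standard chain rule for a Lipschitz function composed with a Sobolev function (Stampacchia's theorem), which gives $\nabla w(x)=g'(u(x))\nabla u(x)$ for a.e.\ $x\in\O$, with $\nabla w=0$ a.e.\ on $\{u=0\}$ and on flat zones of $u$ where $g'$ vanishes; membership in $W^{1,p}_0$ follows since $u\in W^{1,p}_0(\O)$ and $g(0)=0$ (as $W(\Ln(\O))=0$ and $\mu_u(0)=\Ln(\O)$, at least after adjusting at $t=0$). Finally, to obtain the stated formula \eqref{Dw}, I would invoke the Vall\'ee-Poussin differentiation theorem for $\mu_u$: at any $t$ which is a point of differentiability of $\mu_u$ we have $g'(t)=W'(\mu_u(t))\mu_u'(t)$, and by \eqref{dmut} (valid under \eqref{diffu}, or under the weaker \eqref{diffustar}) the right-hand side is meaningful; composing with $u$ and using that a.e.\ $x$ maps to such a $t$ when \eqref{diffu} holds yields $\nabla w(x)=W'(\mu_u(u(x)))\mu_u'(u(x))\nabla u(x)$ a.e., as claimed. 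It remains only to check that the two a.e.\ sets (points where the chain rule applies and points where $u(x)$ is a differentiability point of $\mu_u$) have full measure, which follows from \eqref{diffu} together with the coarea-type argument already used for \eqref{dmut}.
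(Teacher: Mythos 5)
Your proposal is correct and follows the paper's overall architecture (uniqueness because \eqref{key} forces $W$ to be constant on the flat zones of $u^*$; Lipschitz continuity of $g=W\circ\mu_u$; conclusion via the chain rule for a Lipschitz function composed with a Sobolev function), but the central technical step is handled by a genuinely different argument. The paper proves that $g$ is Lipschitz by computing its \emph{distributional} derivative: testing $W\circ\mu_u$ against $\phi'$, rewriting via Fubini using the equivalence $\{s\ge\mu_u(t)\}=\{t\ge u^*(s)\}$, and then applying the coarea formula together with \eqref{dmut} to identify $(W\circ\mu_u)'(t)=(-W'(\mu_u(t)))(-\mu_u'(t))$, which is bounded by $C$ thanks to \eqref{key}. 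You instead propose a direct two-point increment estimate. That route is more elementary and, notably, avoids invoking \eqref{dmut}, which formally rests on a nondegeneracy hypothesis such as \eqref{diffustar} that the lemma does not assume; it also treats the jump and Cantor parts of $\mu_u$ simultaneously. The one place where your sketch should be sharpened is the claim that ``$u^*\circ\mu_u$ is $1$-Lipschitz away from flat zones'': the clean statement is that for $t_1<t_2$ one has $u^*\ge t_1$ on $(0,\mu_u(t_1))$ and $u^*\le t_2$ on $[\mu_u(t_2),\Ln(\O))$, so that, by Lebesgue's theorem for monotone functions,
\begin{equation*}
g(t_2)-g(t_1)=\int_{\mu_u(t_2)}^{\mu_u(t_1)}(-W'(s))\,ds\le C\int_{\mu_u(t_2)}^{\mu_u(t_1)}(-u^*)'(s)\,ds\le C\,(t_2-t_1)\,,
\end{equation*}
which is exactly the rigorous form of your ``flat zones cancel'' argument (with the minor caveats that $W$ is only locally absolutely continuous near $s=0$, handled by a limit, and that $g(0)=W(\mu_u(0))=0$ because \eqref{key} forces $W\equiv 0$ on $[\mu_u(0),\Ln(\O)]$). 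Your final remark on \eqref{Dw} is also fine: on the null set of $t$ where $\mu_u$ fails to be differentiable one has $\nabla u=0$ a.e.\ on the preimage, so the formula holds a.e.\ with the product interpreted as the derivative of the composition, exactly as in the paper.
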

\begin{proof}
Hypothesis \eqref{key} ensures that $W$ is constant where $u^*$ is constant, so that the uniqueness of the extremal satisfying \eqref{HLS=} easily follows. Moreover such an extremal is given by \eqref{rapw}, so it remains to prove that $w\in W^{1,p}_0(\O)$. 

To this aim, let us consider $\phi\in C^1_c(0,+\infty)$.  By the absolute continuity of $W$ and since $W(\Ln(\O))=0$, we get
$$
\int_0^{+\infty} W(\mu_u(t))\phi'(t)dt=\int_0^{+\infty} \left (\int_{\mu_u(t)}^{\Ln(\O)} (-W'(s))ds \right ) \phi'(t)dt\,.
$$
Further, the distribution function $\m_u$ is  a right-continuous and decreasing function and, moreover, it is  continuous if and only if $u^*$ is  strictly decreasing and 
\begin{equation}\label{mou1}
\m_u(u^*(s))=s\qquad  \mbox{   for a.e. } s\in (0,\Ln(\O)]\,.
\end{equation} 
Since $u^*$ is the distribution function of $\mu_u$,  $u^*$ is continuous if and only if $\mu_u$ is strictly decreasing, and in such case we have
\begin{equation}\label{mou2}
u^*(\m_u(t))=t\qquad  \mbox{   for a.e. } t\in (0,+\infty )\, 
\end{equation}
and
\begin{eqnarray*}
 & & \{(s,t)\in (0,\Ln(\O)) \times (0,+\infty): \>\>\> s \ge \mu_u(t)\}\\
&= & \{(s,t)\in (0,\Ln(\O)) \times (0,+\infty): \>\>\> t\ge u^*(s)\}\,.
\end{eqnarray*}
By Fubini's Theorem,  it  follows that
\begin{eqnarray*}
 & & \int_0^{+\infty} \left (\int_{\mu_u(t)}^{\Ln(\O)} (-W'(s))ds \right ) 
\phi'(t)\, dt \\
&=& \int_0^{\Ln(\O)} \left (\int_{u^*(s)}^{+\infty} \phi'(t)dt\right )(-W'(s))\, ds\\
&=&-\int_0^{+\infty}  \phi(u^*(s))(-W'(s))\, ds\,.
\end{eqnarray*}
Assumption \eqref{key} ensures that
\begin{eqnarray*}
 & & \int_0^{+\infty}  \phi(u^*(s))(-W'(s))\, ds\\ 
&=&\int_{[0,\Ln(\O)]\cap\{s: (-u^*)'(s)\neq 0\} }\phi(u^*(s))(-W'(s))\, ds\,.
\end{eqnarray*}
Therefore we obtain, by the coarea formula and  \eqref{dmut}, 
\begin{eqnarray*}
 & & \int_0^{+\infty}  \phi(u^*(s))(-W'(s))\, ds\\
&=& \int_0^{+\infty} \phi(t)\left (\int_{\{s: t=u^*(s)\}}(-W'(s))\frac 1 {(-u^*)'(s)}d\mathcal H^0\right )\, dt\\
&=&\int_0^{+\infty} (-W'(\mu_u(t))(-\mu_u'(t))\phi(t)\, dt\,.
\end{eqnarray*}
We deduce that $W\circ \mu_u$ has a distributional derivative which is given by
$$
(W\circ \mu_u)'(t)= (-W'(\mu_u(t))(-\mu_u'(t))\,,
$$
and \eqref{key} ensures that this derivative is bounded. This implies that $W\circ \mu_u$ is a Lipschitz continuous function. Observe that when $x\in \partial \O$ then $u(x)=0$ and $W(\mu_u(0))=W(\Ln(\O))=0$. By a classical result on composite functions in Sobolev spaces we conclude  that $(W\circ \mu_u)\circ u\in W^{1,p}_0(\O)$ and that its gradient can be evaluated through the chain rule.  This proves \eqref{Dw}.
\end{proof}

The last regularity result also allows to establish a nonlinear version of Theorem \ref{mainSch}. 

We will make use of the following nonlinear version of the classical P\'olya-Szeg\"o inequality (see e.g. \cite{BBMP},\cite{ET}, \cite{CiFe2}, \cite{Ch}, \cite{Br1}),

\begin{equation}\label{PSnonlin}
\int_\O \mathcal A(u(x))|\nabla u(x)|^p\,dx\ge \int_{\O^\star} \mathcal A (u^\star(x))|\nabla u^\star(x)|^p\,dx\,,
\end{equation}
which holds for every nonnegative function $u\in W^{1,p}_0(\O)$, $1\le p < \infty$, and  for every bounded and Borel measurable function 
$\mathcal A:[0,+\infty ) \rightarrow [0,+\infty)$. 

\begin{theorem}\label{PSnonlin1}
Let $\O$ be a  bounded domain of $\RR^n$.  If $u\in W^{1,p}_0(\O)$, $1\le p < \infty$, and if $W$ is a function as in Lemma \ref{regularity}, 
then there exists only one function  $w\in W^{1,p}_0(\O)$  satisfying $w^{\star } = W$ and \eqref{HLS=}. Furthermore, there holds
\begin{equation}
\label{PSnonlin2}
\int_{\Omega} |\nabla u|^{p-2} \nabla u\cdot \nabla w\, dx\ge \int_{\Omega^\star}|\nabla u^\star|^{p-2}  \nabla u^\star \cdot \nabla w^\star\, dx\,.
\end{equation}
\end{theorem}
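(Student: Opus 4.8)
\textbf{Proof plan for Theorem \ref{PSnonlin1}.}
The plan is to combine the uniqueness and structural information coming from Lemma \ref{regularity} with the nonlinear P\'olya--Szeg\"o inequality \eqref{PSnonlin}, reducing \eqref{PSnonlin2} to a one-dimensional inequality between decreasing rearrangements. The existence and uniqueness of $w\in W^{1,p}_0(\Omega)$ with $w^\star=W$ and \eqref{HLS=} is exactly the conclusion of Lemma \ref{regularity}, so nothing new is needed there; moreover that lemma gives the pointwise chain rule \eqref{Dw}, namely $\nabla w(x)=W'(\mu_u(u(x)))\,\mu_u'(u(x))\,\nabla u(x)$ for a.e. $x\in\Omega$. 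Since $W$ is nonincreasing and, by \eqref{key}, $-W'\le C(-u^*)'$, while $\mu_u'\le 0$ by \eqref{dmut}, the scalar factor $W'(\mu_u(u(x)))\,\mu_u'(u(x))$ is nonnegative; hence $\nabla w$ and $\nabla u$ are \emph{parallel and equi-oriented} a.e.\ on $\Omega$. The same reasoning applied on $\Omega^\star$ (where $u^\star$ plays the role of $u$ and, by \eqref{mou1}--\eqref{mou2}, $\mu_{u^\star}=\mu_u$) shows that $w^\star=W\circ(\omega_n|\cdot|^n)$ has gradient parallel and equi-oriented to $\nabla u^\star$ with the identical scalar factor evaluated at $u^\star(x)$.

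With this at hand the two sides of \eqref{PSnonlin2} become
\[
\int_\Omega |\nabla u|^{p-2}\nabla u\cdot\nabla w\,dx
=\int_\Omega \mathcal A(u(x))\,|\nabla u(x)|^p\,dx,\qquad
\int_{\Omega^\star}|\nabla u^\star|^{p-2}\nabla u^\star\cdot\nabla w^\star\,dx
=\int_{\Omega^\star}\mathcal A(u^\star(x))\,|\nabla u^\star(x)|^p\,dx,
\]
where $\mathcal A(t):=W'(\mu_u(t))\,\mu_u'(t)$ for $t$ in the range of $u$ (set $\mathcal A=0$ elsewhere). The crucial point is that $\mathcal A$ is a \emph{bounded, nonnegative, Borel measurable} function of the value $t$ alone: boundedness is precisely the estimate from \eqref{key} that $(-W')(\mu_u(t))(-\mu_u'(t))\le C(-u^*)'(\mu_u(t))(-\mu_u'(t))=C$ (using \eqref{dmut}), nonnegativity was noted above, and measurability follows since $\mu_u$ and $\mu_u'$ are measurable. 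Therefore \eqref{PSnonlin2} is exactly \eqref{PSnonlin} with this choice of $\mathcal A$, and the theorem follows.

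The main technical obstacle is making the substitution $|\nabla u|^{p-2}\nabla u\cdot\nabla w=\mathcal A(u)|\nabla u|^p$ rigorous on the set where $\nabla u=0$ (and similarly $\nabla u^\star=0$): there the chain rule \eqref{Dw} forces $\nabla w=0$ as well, so both sides of the identity vanish and the formula holds trivially, but one must invoke \eqref{Dw} carefully rather than divide by $|\nabla u|$. A secondary point is to check that $\mathcal A$ as defined is genuinely a function of $t$ and not merely of $x$: this is where one uses that $W$ is constant on the (at most countably many) intervals where $u^*$ is constant, so that $W'(\mu_u(t))$ is unambiguously defined for a.e.\ relevant $t$; the identities \eqref{mou1}, \eqref{mou2} and \eqref{dmut} guarantee the corresponding identity $\mathcal A(u^\star(x))=W'(\mu_u(u^\star(x)))\mu_u'(u^\star(x))$ holds a.e.\ on $\Omega^\star$ with the same $\mathcal A$. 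Once these measure-theoretic verifications are done, no further inequality is needed beyond the already-quoted \eqref{PSnonlin}.
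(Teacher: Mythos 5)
Your proposal is correct and follows essentially the same route as the paper: invoke Lemma \ref{regularity} for uniqueness and the chain rule \eqref{Dw}, rewrite the left-hand side as $\int_\Omega \mathcal A(u)|\nabla u|^p\,dx$ with $\mathcal A(t)=W'(\mu_u(t))\mu_u'(t)$ bounded and nonnegative, apply \eqref{PSnonlin}, and identify the resulting right-hand side via $\mu_u(u^\star(x))=\omega_n|x|^n$ and $w^\star(x)=W(\omega_n|x|^n)$. Your additional remarks on the set where $\nabla u=0$ and on the well-definedness and measurability of $\mathcal A$ are details the paper leaves implicit, not a different argument.
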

\begin{proof}
By Lemma \ref{regularity}, we deduce that there exists a unique extremal $w\in W_0 ^{1,1} (\Omega )$  of \eqref{HLS}, which can be represented by \eqref{rapw}.  Moreover, the gradient of $w$ is given by \eqref{Dw}. Since $W\circ \mu _u $ is Lipschitz continuous, this implies that $w\in W_0 ^{1,p} (\Omega )$. Hence we have that
 \begin{eqnarray}
\label{nonlin1}
 & & \int_{\Omega} |\nabla u|^{p-2} \nabla u\cdot \nabla w\, dx
\\
\nonumber 
&= & 
\int_{\Omega} W'(\mu_u(u(x)))\mu_u^\prime (u(x))|\nabla u|^{p} \, dx
=\int_{\Omega} \mathcal A (u(x))|\nabla u|^{p}dx\,,
\end{eqnarray}
where $\mathcal A(t)=W'(\mu_u(t))\mu_u^\prime(t)$ is a nonnegative bounded function. Applying \eqref{PSnonlin}, and since $\mu_u(u^\star (x))=\omega_n |x|^n$ and $w^\star (x)=W(\omega_n |x|^n)$, the assertion follows from  \eqref{nonlin1}.
\end{proof}

Using our method, 
we can recover a classical comparison result for Schwarz symmetrization which is due to Talenti (see \cite{Talentilin}).
\begin{theorem} Let $u\in H^{1}_0(\O)$ be a weak solution to problem \eqref{prSt}, and let $v\in H^{1}_0(\O^\star)$ the weak solution to the symmetrizated problem
\begin{equation*}\label{PstarSt}
\begin{cases}
 -\Delta v=f^\star &\mbox{ in } \O^\star\,,\\
 v=0 & \mbox{ on }\partial\O^\star\,,
\end{cases}
\end{equation*}
then
\begin{equation}\label{grad}
|\nabla u^\star(x)| \le |\nabla v(x)| \qquad \mbox{for  a.e. } x\in\O^\star \,.
\end{equation}
\end{theorem}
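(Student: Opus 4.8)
The plan is to mimic exactly the argument used for Theorem \ref{compnonlinSt}, only now exploiting the \emph{pointwise} comparison that Schwarz symmetrization makes available. As before, by a standard approximation argument we may assume $f$ (hence $u$) is analytic, so that $u\in C^\infty(\O)$ and the set $\{|\nabla u|=0,\ 0<u<\esup u\}$ has measure zero; in particular \eqref{diffu} holds for $u$. Fix a nonnegative $h\in C^\infty(\O^\star)$ with $h=h^\star$ and let $W\in C^\infty(\O^\star)\cap H^1_0(\O^\star)$ solve $-\Delta W=h$ in $\O^\star$, $W=0$ on $\partial\O^\star$. Since $h$ is radially nonincreasing and positive, $W=W^\star$ and $-W'(s)\le C(-u^*)'(s)$ can be arranged on the relevant range, so Lemma \ref{regularity} produces a unique extremal $w\in W^{1,\infty}_0(\O)$ with $w^\star=W$ and $\int_\O uw=\int_{\O^\star}u^\star W$.

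Next I would run the two identities from the proof of Theorem \ref{compnonlinSt}. From the symmetrized problem \eqref{PstarSt},
$$
\int_{\O^\star} f^\star W\,dx=\int_{\O^\star}\nabla v\cdot\nabla W\,dx,
$$
and from Theorem \ref{mainSch} (applicable since $u,w\in W^{1,\infty}_0(\O)$ and \eqref{HLS2=} holds) together with $-\Delta u=f$,
$$
\int_\O f\,w\,dx=-\int_\O w\,\Delta u\,dx=\int_\O\nabla u\cdot\nabla w\,dx\ge\int_{\O^\star}|\nabla u^\star|\,|\nabla W|\,dx.
$$
Combining these with the Hardy-Littlewood inequality $\int_\O fw\le\int_{\O^\star}f^\star W$ yields
$$
\int_{\O^\star}|\nabla u^\star|\,|\nabla W|\,dx\le\int_{\O^\star}\nabla v\cdot\nabla W\,dx\le\int_{\O^\star}|\nabla v|\,|\nabla W|\,dx.
$$
Because $W=W^\star$, its gradient is radial, $\nabla W(x)=W'(\omega_n|x|^n)\,n\omega_n|x|^{n-1}\tfrac{x}{|x|}$, and $-W'\ge 0$; so $|\nabla W(x)|$ is an arbitrary (via the choice of $h$) nonnegative radial weight. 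Hence the inequality $\int_{\O^\star}\big(|\nabla u^\star|-|\nabla v|\big)\,|\nabla W|\,dx\le 0$ for all such $W$ forces $|\nabla u^\star(x)|\le|\nabla v(x)|$ for a.e.\ $x\in\O^\star$, which is \eqref{grad}.

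The main obstacle is the transition from the averaged inequality to the pointwise one. Two points need care. First, one must check that, as $h$ ranges over nonnegative smooth radial functions, $|\nabla W|$ ranges over a rich enough class of radial weights to separate points — equivalently that, writing everything in the radial variable $s=\omega_n|x|^n$, the functions $-W'(s)$ (with $W$ a solution of the radial ODE $-(c_n s^{2-2/n}W')'=\tilde h$ type) are dense in, say, $L^1_{\mathrm{loc}}$; this is standard since $-W'(s)=\int_s^{|\O|}(\text{positive kernel})\tilde h$ and $\tilde h$ is essentially free. Second, one must verify that Theorem \ref{mainSch} applies with $u,w$ only Lipschitz — which is exactly the hypothesis of that theorem — and that the boundary terms in the integration by parts $-\int_\O w\Delta u=\int_\O\nabla u\cdot\nabla w$ vanish, which holds because $w\in W^{1,\infty}_0(\O)$ (from Lemma \ref{regularity}) and $u\in C^2(\overline\O)$ after the analytic reduction. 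Everything else is a verbatim repetition of the Steiner argument with $\O^\#$ replaced by $\O^\star$ and $\nabla u^\#\cdot\nabla W$ replaced by $|\nabla u^\star||\nabla W|$.
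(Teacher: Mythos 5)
Your overall architecture is the paper's: test the symmetrized problem with $W$ solving $-\Delta W=h$, pair $u$ with a Hardy--Littlewood extremal $w$ satisfying $w^\star=W$, apply a two-function P\'olya--Szeg\"o inequality, combine with $\int_\O fw\le\int_{\O^\star}f^\star W$ to get $\int_{\O^\star}\bigl(|\nabla u^\star|-|\nabla v|\bigr)|\nabla W|\,dx\le 0$ (the Schwarz form of \eqref{st3}, using that the three gradients are parallel), and conclude by varying $W$. However, there is a genuine gap in how you produce $w$. You route the key inequality through Theorem \ref{mainSch}, which requires $w\in W^{1,\infty}_0(\O)$, and you obtain this from Lemma \ref{regularity} by asserting that the hypothesis \eqref{key}, $-W'(s)\le C(-u^*)'(s)$, ``can be arranged.'' It cannot, in general: $W$ is determined by the arbitrary radial datum $h$, and $-W'(s)$ has no reason to be controlled by $(-u^*)'(s)$ --- the ratio blows up wherever $(-u^*)'$ degenerates (near critical values of $u$), and nothing in the analytic reduction prevents this. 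If instead you restrict $h$ so that \eqref{key} holds, you lose the arbitrariness needed for the final step. The paper avoids this entirely: its proof of Theorem \ref{compnonlinSt}, which the Schwarz proof ``adapts,'' uses Theorem \ref{wPS} (inequality \eqref{eq:wPS}), which puts all the regularity on $u$ (analytic after approximation) and requires of $w$ only that it be a measurable extremal of Hardy--Littlewood with $w^\star=W$ --- such a $w$ always exists. Replacing your Theorem \ref{mainSch} + Lemma \ref{regularity} step by the Schwarz analogue of \eqref{eq:wPS} repairs the argument and is exactly what the paper does.

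A secondary caution on the last step: your density claim is overstated. Since $h$ must be nonnegative \emph{and} radially nonincreasing ($h=h^\star$), the achievable weights are of the form $|\nabla W(r)|\,r^{n-1}=c\int_0^r h(\rho)\rho^{n-1}d\rho$, so they form a restricted cone (essentially nondecreasing, concave-type profiles in the variable $s=\omega_n r^n$), not a class dense in $L^1_{\mathrm{loc}}$; one cannot localize $|\nabla W|$ near a given radius with nonnegative $h$. The passage from the tested inequality to the a.e.\ pointwise one therefore needs the kind of explicit radial/coarea computation the paper carries out in the proof of Theorem \ref{compnonlin} (expressing $|\nabla u^\star|$ and $|\nabla v|$ on level spheres through $-\frac{du^*}{ds}$ and $-\frac{dv^*}{ds}$), rather than an abstract separation argument. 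The paper itself is terse here (``by the arbitrariness of $W$''), but your proposed justification as written would not go through.
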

\begin{proof} 
We adapt the previous proof  to the case of Schwarz symmetrization. Since the vectors $\nabla u^\star$, $\nabla v$ and $\nabla W$ are parallel, the inequality \eqref{st3} is equivalent to
$$
\int_{\O^\star}\left [ |\nabla u^\star(x)|-|\nabla v(x)|\right ]|\nabla W(x)|dx\le 0\,.
$$
By the arbitrariness of $W$, \eqref{grad} follows.
\end{proof}
Finally, a similar result can be also obtained for nonlinear differential operators. More precisely, we consider the following homogeneous Dirichlet problem for the $p$-Laplacian, 
\begin{equation} \label{pr}
\left\{
\begin{array}{lll}
-\mbox{div}\left(  | \nabla u|^{p-2} \nabla u \right) =f &  & \text{in}\ \Omega \\
u=0 &  & \text{on}\ \partial \Omega\,,%
\end{array}%
\right. 
\end{equation}%
where $\Omega $ is an bounded domain of $\mathbb{R}^{N}$, $N\ge 2$, $1<p<\infty$ and $f$ is a measurable function belonging to 
$L^{(p ^*)'}(\Omega)$, where $(p^*)' := \frac{np}{np-n+p} $.

The Polya-Sz\"ego type inequality  proved in the previous section allows us to give a different proof of a comparison result which is due to Talenti (see \cite{Talentinonlin}).

\begin{theorem}
\label{compnonlin}
Let $u\in W_0 ^{1,p}(\Omega)$ be the weak solution to problem \eqref{pr} and let $v\in W_0 ^{1,p}(\Omega^\star)$ be the weak solution to the symmetrized problem
\begin{equation}\label{Pstar}
\begin{cases}
-\mbox{\rm div}\left( | \nabla v|^{p-2}\nabla v  \right) =f^\star &\mbox{ in } \O^\star\,,\\
 v=0 & \mbox{ on }\partial\O^\star\,.
\end{cases}
\end{equation}
Then
$$
u^\star (x)\le  v(x) \quad \mbox{ for a.e. } x\in\O\,.
$$
\end{theorem}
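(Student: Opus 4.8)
The plan is to mimic the proof of Theorem \ref{compnonlinSt}, replacing the linear comparison argument by the nonlinear P\'olya-Szeg\"o-type inequality \eqref{PSnonlin2} of Theorem \ref{PSnonlin1}. First I would reduce, by a standard approximation, to the case where $f$ (and hence $u$) is smooth enough that the formal manipulations below are justified; as in the linear case one may also assume $f$ analytic if needed, though for the $p$-Laplacian one should be a bit careful and instead argue with approximating data and a density/stability argument for weak solutions. Next, fix an arbitrary nonnegative $h\in C_c^\infty(\O^\star)$ with $h=h^\star$ and let $W\in W_0^{1,p}(\O^\star)$ solve the symmetrized $p$-Laplacian Dirichlet problem with datum $h$; one checks $W=W^\star$ and, crucially, that $W$ satisfies the structural hypothesis \eqref{key} of Lemma \ref{regularity} (this follows because, on the rearranged problem, $-W'(s)$ can be expressed through the decreasing rearrangement of $h$ and the explicit ODE satisfied by $v^*$, and a comparison with $(-u^*)'$ is available; here I expect a little work).

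With $W$ in hand, invoke Lemma \ref{regularity}/Theorem \ref{PSnonlin1} to produce the unique extremal $w\in W_0^{1,p}(\O)$ with $w^\star=W$ and $\int_\O u\,w=\int_{\O^\star}u^\star W$. Testing the weak formulation of \eqref{pr} with $w$ gives
\begin{equation*}
\int_\O f\,w\,dx=\int_\O |\nabla u|^{p-2}\nabla u\cdot\nabla w\,dx
\ge \int_{\O^\star}|\nabla u^\star|^{p-2}\nabla u^\star\cdot\nabla w^\star\,dx,
\end{equation*}
where the inequality is exactly \eqref{PSnonlin2}. On the other hand, testing the weak formulation of \eqref{Pstar} with $W$ yields
\begin{equation*}
\int_{\O^\star} f^\star W\,dx=\int_{\O^\star}|\nabla v|^{p-2}\nabla v\cdot\nabla W\,dx.
\end{equation*}
Since $\int_\O f\,w\le\int_{\O^\star} f^\star w^\star=\int_{\O^\star} f^\star W$ by Hardy-Littlewood \eqref{HLS}, combining the three displays gives
\begin{equation*}
\int_{\O^\star}\Big(|\nabla u^\star|^{p-2}\nabla u^\star-|\nabla v|^{p-2}\nabla v\Big)\cdot\nabla W\,dx\le 0.
\end{equation*}

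To finish, I would rewrite the last inequality as a relation between $u^\star$ and $v$ tested against $h$. The delicate point is that, unlike the linear case, $-\mathrm{div}(|\nabla u^\star|^{p-2}\nabla u^\star)$ need not equal $f^\star$; the right object is that $u^\star$ is a subsolution of the symmetrized problem in the sense that $\int_{\O^\star}|\nabla u^\star|^{p-2}\nabla u^\star\cdot\nabla\varphi\le\int_{\O^\star}f^\star\varphi$ for suitable test functions (this is Talenti's classical inequality, which can be re-derived along the same lines). Granting that, the monotonicity of the $p$-Laplacian operator and a comparison-principle argument (choosing $\varphi=(u^\star-v)_+$, or exploiting the arbitrariness of $h$ together with Proposition \ref{ALT}-type reasoning adapted to the nonlinear setting) gives $u^\star\le v$ a.e. I expect the main obstacle to be precisely this last step: verifying hypothesis \eqref{key} for the solution $W$ of the nonlinear symmetrized problem, and then converting the tested inequality into the pointwise comparison $u^\star\le v$ without the linearity that made \eqref{st3} collapse so cleanly in the proof of Theorem \ref{compnonlinSt}; the parallelism of $\nabla u^\star$, $\nabla v$, $\nabla W$ helps, but the nonlinear dependence on $|\nabla W|$ must be handled with care, likely via the strict monotonicity of $t\mapsto |t|^{p-2}t$.
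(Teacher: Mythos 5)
Your overall strategy---test \eqref{pr} with a Hardy--Littlewood extremal $w$, apply the nonlinear P\'olya--Szeg\"o inequality \eqref{PSnonlin2}, and compare with the symmetrized problem---is the right skeleton, but the way you instantiate it has two genuine gaps, both of which you flag yourself and neither of which resolves. First, you propose to take $W$ as the solution of the symmetrized $p$-Laplace problem with an auxiliary radial datum $h$, and then to verify hypothesis \eqref{key} of Lemma \ref{regularity} for this $W$. This verification will in general \emph{fail}: for such a $W$ one has $-W'(s)=\bigl(n\omega_n^{1/n}s^{1-1/n}\bigr)^{-p/(p-1)}\bigl(\int_0^s h^*\bigr)^{1/(p-1)}$, which is strictly positive near $s=0$ whenever $h\not\equiv 0$ there, whereas $(-u^*)'(s)$ vanishes on any flat zone of $u^*$ (e.g.\ if $u$ attains its maximum on a set of positive measure). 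So no constant $C$ as in \eqref{key} exists, the extremal $w$ need not be unique or Sobolev, and Theorem \ref{PSnonlin1} cannot be invoked. Second, even granting that step, your final inequality $\int_{\O^\star}(|\nabla u^\star|^{p-2}\nabla u^\star-|\nabla v|^{p-2}\nabla v)\cdot\nabla W\,dx\le 0$ cannot be integrated by parts against the nonlinear operator, and the ``subsolution'' property $\int|\nabla u^\star|^{p-2}\nabla u^\star\cdot\nabla\varphi\le\int f^\star\varphi$ for general test functions $\varphi$ is not available---it is essentially the statement one is trying to prove. You correctly identify this as the main obstacle but leave it open.

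The paper avoids both problems by choosing the test function the other way around: it fixes an arbitrary nonnegative $h\in L^\infty(0,+\infty)$, sets $\Phi(t)=\int_0^t h$, and takes $w=\Phi(u)$ directly. Then $w$ is automatically an extremal of \eqref{HLS=} (its level sets are nested with those of $u$), $W:=w^\star=\Phi(u^*)$ satisfies \eqref{key} trivially with $C=\|h\|_{L^\infty}$ since $-W'(s)=h(u^*(s))(-u^*)'(s)$, and $w$ is an admissible test function in \eqref{pr}. No auxiliary boundary value problem is solved. The conclusion is then extracted not by duality but by radial symmetry: since $\nabla w^\star=h(u^\star)\nabla u^\star$ and all gradients are radial, the analogue of your final display becomes, via the coarea formula, an inequality between $\bigl(-\tfrac{du^*}{ds}\bigr)^{p-1}$ and $\bigl(-\tfrac{dv^*}{ds}\bigr)^{p-1}$ evaluated at $s=\mu_u(t)$, valid for a.e.\ $t$ by the arbitrariness of $h$; integrating from $s$ to $\LL^n(\O)$ yields $u^*\le v^*$ and hence $u^\star\le v$. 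If you want to salvage your plan, replace the auxiliary Dirichlet problem for $W$ by the choice $w=\Phi(u)$ and finish with the coarea/radiality argument rather than with a comparison principle.
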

\begin{proof}
Let $h\in L^\infty(0,+\infty)$ be a nonnegative function and consider the function
$$ 
\Phi (t)=\int_0^t h(\tau)d\tau\,, \qquad t\ge 0\,.
$$
Since $\Phi$ is  Lipschitz continuous and $\Phi(0)=0$, we can choose $w=\Phi(u)$ as a test function in \eqref{pr} to get
\begin{equation}\label{comp1}
\int_\O |\nabla u|^{p-2}\nabla u\cdot\nabla w\,dx=\int_\O fwdx\,.
\end{equation}
Further, since $\Phi$ is non decreasing, we have $[\Phi(u)]^*=\Phi(u^*)$ and, arguing as above, we can choose $w^\star=\Phi(u^*)$ as test function in \eqref{Pstar}. By \eqref{comp1} and the Hardy-Littlewood inequality we obtain
\begin{equation}\label{comp2}
\int_\O |\nabla u|^{p-2}\nabla u\cdot\nabla w\,dx
\le \int_{\O^\star}f^\star w^\star dx
=\int_{\O^\star}|\nabla v|^{p-2}\nabla v\cdot\nabla w^\star dx\,.
\end{equation}
On the other hand, the function $W:= w^\star$ satisfies the assumptions of Theorem \ref{PSnonlin1} and $w$ is  the function which realizes equality \eqref{HLS=}. Hence, applying  \eqref{comp2}, we have
\begin{equation}\label{comp3}
\int_{\O^\star} |\nabla u^\star|^{p-2}\nabla u^\star\cdot\nabla w^\star\,dx
\le \int_{\O^\star}|\nabla v|^{p-2}\nabla v\cdot\nabla w^\star dx\,.
\end{equation}
In view of the coarea formula and 
since  
$$|\nabla w^\star(x)|=\Phi'(u^\star)|\nabla u^\star (x)|=h(u^\star)|\nabla u^\star (x)|\, ,$$ 
 equation \eqref{comp3} can be equivalently written as
\begin{eqnarray}
\label{comp4}
 & & \qquad \int_{\O^\star} h(u^\star(x))|\nabla u^\star (x)|\left [ |\nabla u^\star(x)|^{p-1}-|\nabla v(x)|^{p-1}\right ]dx\\
\nonumber
 &= & \int_0^{+\infty} h(t)\int_{\{x: \omega_n|x|^n=\mu_u(t)\}}\left [ |\nabla u^\star(x)|^{p-1}-|\nabla v(x)|^{p-1}\right ] d\HH^{n-1}\,dt
\le 0\,.
\end{eqnarray}
Furthermore, since both $u^\star$ and $v$ are radial functions, $|\nabla u^\star(x)|$ and $|\nabla v(x)|$ are constant on $\{x\in \O^\star\,: \omega_n|x|^n=\mu_u(t)\}$ and these constants are given by
$$|\nabla u^\star |_{|\{x: \omega_n|x|^n=\mu_u(t)\}}=n\omega_n^{1/n}\mu_u(t)^{1-1/n}\left (-\frac {du^*}{ds}(\mu_u(t))\right )\,, $$
$$|\nabla v |_{|\{x: \omega_n|x|^n=\mu_u(t)\}}=n\omega_n^{1/n}\mu_u(t)^{1-1/n}\left (-\frac {dv^*}{ds}(\mu_u(t))\right )\,.$$
Together with (\ref{comp4}) this implies 
$$\int_0^{+\infty} h(t)\, \left [ \left (-\frac {du^*}{ds}(\mu_u(t))\right )^{p-1}-\left (-\frac {dv^*}{ds}(\mu_u(t))\right )^{p-1}\right ] \left (n\omega_n^{1/n}\mu_u(t)\right )^pdt\le 0\,,$$
for any nonnegative function $h\in L^\infty (0,+\infty)$. By the arbitrariness of $h$, we deduce
$$\left (-\frac {du^*}{ds}(\mu_u(t))\right )^{p-1}\le \left (-\frac {dv^*}{ds}(\mu_u(t))\right )^{p-1}\,, \quad \mbox{for a.e. } t\in(0,\infty)\,,$$
from which we easily deduce the thesis. 
\end{proof}
\end{document}